\def\R{\mathbb{R}}
\def\C{\mathbb{C}}
\def\N{\mathbb{N}}
\def\Z{\mathbb{Z}}
\def\T{\mathbb{T}}
\def\cC{\mathcal{C}}
\def\cS{\mathcal{S}}
\def\a{\alpha}
\def\b{\beta}
\def\l{\lambda}
\def\p{\partial}
\def\veps{\varepsilon}
\def\vphi{\varphi}
\def\hf{\widehat{f}}
\def\tf{\widetilde{f}}
\def\hu{\widehat{u}}
\newcommand{\dv}[1]{\,{\mathrm d}#1}
\newcommand{\wcheck}[1]{#1\hspace{-.8ex}\mbox{\huge {\lower.45ex \hbox{$\textstyle \check{}$}}} \hspace{.5ex}}
\let\oldmarginpar\marginpar
\renewcommand\marginpar[1]{
  \oldmarginpar[\raggedleft\footnotesize #1]
  {\raggedright\footnotesize #1}}
\newtheorem{definition}{Definition}
\newtheorem{lemma}[definition]{Lemma}
\newtheorem{theorem}[definition]{Theorem}
\newtheorem{remark}[definition]{Remark}
\definecolor{tourquoise}{RGB}{0,170,180}	
\definecolor{darkred}{RGB}{238,34,34}		
\definecolor{darkgreen}{RGB}{0,190,0}		
\definecolor{lightgray}{RGB}{210,210,210}	
\definecolor{deepblue}{RGB}{0,0,240}		
\definecolor{darkgray}{RGB}{144,144,144}	
\definecolor{kingblue}{RGB}{64,96,224}		
\definecolor{gold}{RGB}{240,208,0}		
\definecolor{verydarkred}{RGB}{176,0,0}		
\def\T{\mathbb{T}}
\def\hv{\widehat{v}}
\def\hw{\widehat{w}}
\def\tv{\widetilde{v}}
\def\tV{\widetilde{V}}
\def\hV{\widehat{V}}
\def\i{\mathrm{i}}
\def\ikx{{\i k\cdot x}}
\def\jinNnd{{j\in\N_n^d}}
\def\kinZnd{{k\in\Z_n^d}}
\def\smz{{\setminus \{0\}}}
\newcommand{\sbring}[1]{\accentset{\circ}{#1}}
\def\oH{\sbring{H}}
\def\Lip{\text{Lip}}
\def\hpsi{\widehat{\psi}}
\newcommand{\abssec}[1]{\noindent\normalsize {\bfseries #1\quad }\ignorespaces}
\renewenvironment{abstract}{\abssec{Abstract}}{\par\vspace{.1in}}
\newenvironment{keywords}{\abssec{Key Words}}{\par\vspace{.1in}}
\newenvironment{AMS}{\abssec{AMS subject
		classification}}{\par\vspace{.1in}}
\DeclareMathAlphabet{\mathpzc}{OT1}{pzc}{m}{it}
\numberwithin{equation}{section}
\title{Spectral approximation of fractional PDEs in image processing and phase field modeling
\thanks{The work of the first author is partially supported by  NSF grant DMS-1521590. }}
\author{Harbir Antil\thanks{Department of Mathematical Sciences, George Mason University, Fairfax, VA 22030, USA. \texttt{hantil@gmu.edu}}
\and
S\"oren Bartels\thanks{Department of Applied Mathematics, Mathematical Institute, University of 
Freiburg, Hermann-Herder-Str 9, 79104 Freiburg I. Br. Germany.
\texttt{bartels@mathematik.uni-freiburg.de}}
}
\begin{document}

 \maketitle 
  
\begin{abstract}
Fractional differential operators provide an attractive mathematical tool 
to model effects with limited regularity properties. Particular examples
are image processing and phase field models in which jumps across lower
dimensional subsets and sharp transitions across interfaces are of 
interest. The numerical solution of corresponding model problems via
a spectral method is analyzed. Its efficiency 
and features of the model problems are illustrated by numerical experiments. 
\end{abstract}

\begin{keywords}
Fractional Laplacian; image denoising; phase field models; error analysis;
Fourier spectral method.
\end{keywords}

\begin{AMS}
35S15, 26A33, 65R20, 65N12, 65N35, 49M05, 49K20
\end{AMS}

\section{Introduction}\label{s:intro}

Let $\T^d$, $d\ge 1$, be the $d$-dimensional torus. The purpose of 
this paper is to study the approximation of problems involving the fractional 
Laplace operator of order $2s$ 
\[
 (-\Delta)^s \equiv (-\Delta_{\mathbb{T}^d})^s 
\]
using the Fourier spectral method and to illustrate the importance of fractional 
differential operators. Such operators appear in various 
models with periodic boundary conditions, 
see \cite{MR2773101,MR2276260,MR3477397}. The approach discussed here extends to 
problems with other boundary conditions such as Dirichlet or Neumann 
boundary conditions. For a discussion on nonhomogeneous boundary conditions we refer to \cite{antil2017fractional}.

Motivated by applications including fracture mechanics and turbulence,
see~\cite{ge2015fractional,wow,NEGRETE}, 
problems with fractional derivatives have recently gained a lot of interest. 
Several experiments suggest the presence of fractional derivatives, for instance, 
the electrical signal propagation in a cardiac tissue \cite{AOBueno_DKay_VGrau_BRodriquez_KBurrage_2014a}. 
The appearance of fractional derivatives there is attributed 
to the heterogeneity of the underlying medium. 
A question arises: can one, for example, tailor the diffusion coefficient in  
\cite{AOBueno_DKay_VGrau_BRodriquez_KBurrage_2014a} to get the same effects as 
the fractional model? 
Indeed to arrive at a direct evidence justifying the presence of fractional 
derivatives is a difficult question to address. This paper is an attempt to 
partially address this question by considering two specific applications where the 
presence of spectral fractional operators makes a significant difference.
In particular, we illustrate the effect and advantages of fractional 
derivatives on two, by now, classical problems: image denoising and phase field 
modeling. 

A well-known total variation based image denoising model is the so-called 
Rudin--Osher--Fatemi (ROF) model \cite{rudin1992nonlinear} which seeks a 
minimizer $u\in BV(\T^d)\cap L^2(\T^d)$ for 
\begin{equation}\label{eq:E}
 E(u) = |Du|(\mathbb{T}^d) + \frac{\alpha}{2} \|u-g\|^2.
\end{equation}
Here $\T^d$ denotes the image domain, $\|\cdot\|$ is the norm in $L^2(\T^d;\C)$ 
with corresponding inner product $(\cdot,\cdot)$, and $\alpha>0$ is a 
regularization parameter. The function $g:\T^d\rightarrow \C$ represents the 
given observed possibly noisy image. The first term in $E$ is the total 
variation $|Du|(\T^d)$ which has a regularizing effect but at the same time
allows for discontinuities which may represent edges in the image. The second 
term is the fidelity term which measures the distance to the given image. 
Often, weaker norms such as the $H^{-1}$ norm are considered to define the latter
term. While the existence and uniqueness of a minimizer can be established via
the direct method of calculus of variations, the non-differentiability of the 
total variation is challenging from a computational point of view. In fact, a 
non-exhaustive list of papers that have attempted to resolve this are 
\cite{MR2970738,MR3483094,MR1668254,MR2068672,MR2566128,MR2496060,MR1780606,MR3309171}. 
Another question to ask is, whether natural images belong to $BV(\T^d)\cap L^2(\T^d)$.
The paper \cite{MR1871413} shows that natural images are incompletely  
represented by $BV(\T^d)$ functions. We will handle both these shortcomings 
by replacing the total variation term in \eqref{eq:E} by a squared fractional
Sobolev norm. In other words, we propose to minimize 
\begin{equation}\label{eq:EM}
E(u) = \frac{1}{2}\|(-\Delta)^{s/2}u\|^{2} 
  + \frac{\alpha}{2} \|(-\Delta)^{-\b/2}(u-g)\|^2 ,
\end{equation}
with $0 < s <1$ and $\b\in [0,1]$. The first order necessary and 
sufficient optimality condition determines the unique minimizer $u$ via 
\begin{equation}\label{eq:EMO}
 (-\Delta)^s u + \alpha (-\Delta)^{-\b} (u-g) = 0 \quad \mbox{in } \mathbb{T}^d ,
\end{equation}
which is a linear elliptic partial differential equation (PDE) that 
can be efficiently solved using, for instance, the Fourier spectral method 
(which is the focus of this paper) or the so-called Caffarelli-Silvestre 
extension (in $\R^n$) \cite{LCaffarelli_LSilvestre_2007a} and the Stinga-Torrea extension (in bounded domains) \cite{MR3477397,PRStinga_JLTorrea_2010a}, see also \cite{RHNochetto_EOtarola_AJSalgado_2014a}. Our experiments reveal that the fractional model~\eqref{eq:EMO}
leads to results which are comparable to those provided by the ROF model
but at a significantly reduced computational effort (see Section~\ref{s:Image}). 
We remark that fractional derivatives have been used in 
image processing before; see \cite{carasso2012fractional,carasso2014recovery},
where the authors use spectral methods, and \cite{GH:14}, where the authors 
use the finite element method. However, in both these cases the authors solve 
a fractional dynamical system with initial condition given by $g$. 
For completeness, we also refer to \cite{MR3321062} where the authors consider a
fractional norm equivalent regularization in the context of optimal control problems 
and parameter identification problems -- this equivalent norm was realized using a 
mutilevel approach.

A mathematical justification of our choice of \eqref{eq:EM} as a substitute 
to \eqref{eq:E} is given next. We seek $u$ solving \eqref{eq:EMO} in a 
fractional Sobolev space $H^s(\T^d)$. Moreover, we notice that if 
$g \in L^\infty(\T^d)$ then following Theorem 3.5 part (1)(b) of \cite{MR3385190}
it is possible to show that $u \in L^\infty(\T^d)$, see also 
\cite{antil2016note}. 
We will next see that $BV(\T^d) \cap L^\infty(\T^d)$ 
is contained in $H^s(\T^d)$ for $s < 1/2$. Indeed by Lemma~38.1 
of \cite{MR2328004}  
we have the following continuous embedding 
\[
 BV(\T^d) \cap L^\infty(\T^d) \subset
 B^{1/2}_{2,\infty}(\T^d)
\]
where $B^{1/2}_{2,\infty}(\T^d)$ is a Besov space. In addition, using 
Proposition~1.2 of \cite{MR2231013}, see also \cite[pg.~1222]{MR2175433} 
and \cite[Section~3]{toft2001embeddings}, we have the following continuous embedding 
\[
 B^{1/2}_{2,\infty}(\T^d) \subset H^s(\T^d)
\]
provided that $s < 1/2$. Finally, combining the inclusions we arrive at 
\[
 BV(\T^d) \cap L^\infty(\T^d) \subset H^s(\T^d),
\]
which justifies our energy functional~\eqref{eq:EM}.
We remark that the regularizing quadratic term in~\eqref{eq:EM} does not have the 
gradient sparsity property of the total variation norm. 
This effect however cannot be proven for the ROF model in general due the 
presence of the quadratic fidelity term but is certainly visible in experiments.

As a second example we consider gradient flows of the energy functional 
\begin{equation}\label{eq:Eeps}
 E_\veps(u)=\frac{1}{2}\|(-\Delta)^{s/2} u\|^{2} + \veps^{-2} \int_{\T^d} F(u)\;dx
\end{equation}
with initial condition $u(0)=u_0$. The $L^2$-gradient flow of \eqref{eq:Eeps} 
leads to the fractional Allen--Cahn equation 
\begin{align}\label{eq:fracAC}
\begin{aligned}
\partial_t u + (-\Delta)^s u + \veps^{-2} f(u) 
  &= 0 \; \quad  \mbox{in } (0,T) \times \T^d, \\ 
 u(0,\cdot) &= u_0 \quad  \mbox{in } \T^d ,
\end{aligned}
\end{align}
where $0 < s < 1$, $T>0$, and $f=F'$ is typically 
nonlinear in $u$. Moreover, 
\[
 \veps^{-2} := \left\{ \begin{array}{ll} 
                \widetilde\veps^{-2s}  & \mbox{if }  s\in (0,1/2) ,\\
                |\log \widetilde\veps| & \mbox{if }  s=1/2 ,\\    
                \widetilde\veps^{1-2s} & \mbox{if }  s\in (1/2,1)   
                \end{array}
              \right.
\]
where $0<\widetilde\veps<1$. When $s=1$, we set $\veps^{-2} := \widetilde\veps^{-2}$.
The $H^{-\alpha}$ gradient flow, with $\alpha\in(0,1]$ leads 
to the fractional Cahn--Hilliard equation
\begin{align}\label{eq:fracCH}
\begin{aligned}
(-\Delta)^{-\a}  \partial_t u + (-\Delta)^{s} u + \veps^{-2}  f(u) 
  &= 0 \quad \; \mbox{in } (0,T) \times \T^d, \\ 
 u(0,\cdot) &= u_0 \quad \mbox{in } \T^d ,
\end{aligned}
\end{align}
By testing \eqref{eq:fracCH} with a constant function it is easy to check
that \eqref{eq:fracCH} is mass conserving. 
We note that throughout this
article we consider $\widetilde\veps$ as a fixed small number. 
The aforementioned
scaling of $\widetilde\veps$ is the right scaling to obtain a sharp interface 
limit as $\widetilde\veps \to 0$ we refer the reader to~\cite{MR2948285}. 
We remark that even though
the optimality system in case of image denoising is linear \eqref{eq:EMO}, the
system for the fractional phase field model is nonlinear \eqref{eq:fracCH}
and controlling these nonlinearities in the presence of fractional derivatives
turned out to be a nonobvious task.

When $s=1$, a standard numerical method requires a fine mesh resolution 
around interfaces to capture sharp transitions \cite{MR2764423}. 
The fully discrete scheme proposed in this paper is unconditionally stable 
and supported by a rigorous error analysis. In our experiments 
we observe that using the 
spectral method and choosing small values for $s$, it is possible to obtain
sharp interfaces on relatively coarse meshes and moderate values for 
$\widetilde\veps$.

We remark that the use of spectral methods in the context of phase field models 
(when $s=1$, $\alpha=0$ or $\alpha=1$) has been considered before, 
see \cite{MR3225505,MR2728977}. We further remark that the recent paper 
\cite{MR3491939} also investigates a fractional Allen--Cahn equation and uses the 
fractional Riemann-Liouville derivative which is different from our definition. 
In addition, few analytical details are provided. We also refer 
to \cite{ainsworthanalysis} which discusses analytical properties
of a fractional Cahn--Hilliard equation with fixed $s=1$. The authors report 
that the dynamics in case $\alpha>0$ and $s=1$ are closer to the classical 
Cahn-Hilliard equation than to the Allen-Cahn equation. The error analysis 
provided there is restricted to spatial discretizations while the used
fully discrete 
scheme treats the nonlinearity explicitly and is hence only conditionally stable. 

We remark that the goal of this article is to show possible applications 
of fractional PDEs. The simple image denoising problem serves as a
model problem in which the effect of fractional derivatives of different
order becomes directly apparent. The nonlinear evolution model defined by
the fractional phase field equation combines different effects so that 
an interpretation of the effect of fractional derivatives of different order
requires a more careful interpretation. Our experiments are meant to illustrate
these effects for different parameters $s$.

This paper is organized as follows: In Section~\ref{s:notation} we recall 
facts about spectral interpolation estimates in fractional Sobolev spaces. 
The fractional Laplace operator and its discretization by the spectral method 
are addressed in Section~\ref{s:fLap}. We present the details on the fractional 
image denoising problem in Section~\ref{s:Image}. Section~\ref{s:fracAC} is 
devoted to a general error analysis for a numerical scheme covering both 
the fractional  Allen--Cahn and Cahn--Hilliard equations. We conclude with several 
illustrative numerical examples in Section~\ref{s:numerics}.

\smallskip

\section{Spectral approximation} \label{s:notation}

In this section we specify notation needed to define the 
discrete Fourier transformation and recall elementary approximation results
in fractional Sobolev spaces. 

\subsection{Discrete Fourier transformation}\label{s:discF}
We consider the $2\pi$-periodic torus $\T^d$ and the  
set of grid points $(x_j: j\in \N_n^d)$ on $\T^d$ defined by 
$
x_j = (j_1,\dots,j_d) \frac{2 \pi}{n},
$
where 
$
\N_n^d = \big\{ j = (j_1,\dots,j_d) \in \Z^d: 0 \le j_i \le n-1 \big\}.
$
A family of grid functions $(\Phi^k:k\in \Z_n^d)$ is defined by 
\[
\Phi^k = \big(e^{\ikx_j} : j\in \N_n^d\big),
\]
where
$
\Z_n^d = \big\{ k = (k_1,\dots,k_d) \in \Z^d: -n/2 \le k_i \le n/2-1 \big\}
$
and $\i^2 =-1$.
For grid functions $V=(v_j:j\in \N_n^d)$ and $W=(w_j:j\in \N_n^d)$ we
define the discrete scalar product 
\[
(V,W)_n = \frac{(2\pi)^d}{n^d} \sum_\jinNnd v_j \overline{w}_j.
\]
The associated norm is denoted $\|\cdot\|_n$. Notice that the family 
$(\Phi^k:k\in \Z_n^d)$ defines an orthogonal basis for the
space of grid functions with $\|\Phi^k\|_n = (2\pi)^{d/2}$. 
The {\em discrete Fourier transform} of a grid function $V=(v_j:j\in \N_n^d)$ 
is the coefficient vector $\tV = (\tv_k:\kinZnd)$ with 
\[
\tv_k = (V,\Phi^k)_n.
\] 
With these  coefficients we have
$V = (2\pi)^{-d} \sum_\kinZnd \tv_k \Phi^k.$

\subsection{Trigonometric interpolation}\label{s:trigI}
We consider the space of trigonometric polynomials defined via
\[
\cS_n = \big\{ v_n \in C(\T^d;\C): v_n(x) 
= \sum_\kinZnd c_k \vphi^k(x),  \, c_k \in \C \big\},
\]
with the functions $\vphi^k(x) = e^\ikx$ which define an orthogonal 
basis for $\cS_n$ with respect to the inner product in $L^2(\T^d;\C)$. 
With $v \in C(\T^d;\C)$ we associate a grid function 
$V = (v_j:\jinNnd)$ via $v_j = v(x_j)$, $j\in \N_n^d$. Notice that 
for $v_n,w_n\in \cS_n$ with associated grid functions $V,W$ we have
\[
(v_n,w_n) = (V,W)_n.
\] 
The discrete Fourier transformation gives rise to a nodal 
interpolation operator.

\begin{definition}
Given $v\in C(\T^d;\C)$ with nodal values $V=(v_j:\jinNnd)$
and discrete Fourier coefficients $\tV = (\tv_k:\kinZnd)$, 
the {\em trigonometric interpolant} $I_n v\in \cS_n$ of $v$ 
is defined via
\[
I_n v = \frac{1}{(2\pi)^d}\sum_\kinZnd \tv_k \vphi^k.
\]
\end{definition}

\begin{remark} 
(i) Note that $I_nv(x_j) = v(x_j)$ for all $\jinNnd$. \\
(ii) We have $\tv_k = (v,\vphi^k)_n$ for all $\kinZnd$. \\
(iii) We have $(I_n v,w_n)_n = (v,w_n)_n$
for all $w_n\in \cS_n$ and $v\in C(\T^d;\C)$. 
\end{remark}

The {\em (continuous) Fourier transform} of a function $v \in L^2(\T^d;\C)$
is the coefficient vector $\hV = (\hv_k: k\in\Z^d)$ defined by 
\[
\hv_k = (v,\vphi^k).
\]
Note that here the $L^2$ inner product is used instead of its discrete
approximation. 
With respect to convergence in $L^2(\T^d;\C)$ we have that 
$v = (2\pi)^{-d}\sum_{k\in \Z^d} \hv_k \vphi^k,$ 
and, in particular, 
Plancherel's formula 
$(v,w) = (2\pi)^{-d} (\hv,\widehat{w})_{\ell^2(\Z^d)}$.

\subsection{Approximation in Sobolev spaces}\label{s:sob}
We analyze the approximation properties of the interpolation operator
$I_n$ in terms of Sobolev norms and with the help of the $L^2$ projection
onto $\cS_n$ which is obtained by truncation of the Fourier series of 
a function. 

\begin{definition}\label{def:Pn}
The {\em $L^2$ projection} $P_n:L^2(\T^d;\C)\to \cS_n$ is for $v\in L^2(\T^d;\C)$
defined by the condition that for all $w_n\in \cS_n$ we have
\[
(P_n v, w_n) = (v,w_n).
\]
\end{definition}

Note that for every $v\in L^2(\T^d;\C)$ we have 
$P_n v = \sum_\kinZnd \hv_k \vphi^k$. The following definition is motivated
by the fact that $(\widehat{\p^\a v})_k = \i^{|\a|} k^\a \hv_k$ for every
$\a\in \N_0^d$. 

\begin{definition}
Given $\mu \ge 0$ the {\em Sobolev space} $H^\mu (\T^d;\C)$ 
consists of all functions $v\in L^2(\T^d;\C)$ with 
\[
|v|_\mu^2 = \sum_{k\in\Z^d } |k|^{2\mu} |\hv_k|^2 < \infty.
\]
Its dual $H^{-\mu}(\T^d;\C)$ consists of all linear functionals 
$\psi: H^\mu(\T^d;\C) \to \C$ with 
\[
|\psi|_{-\mu}^2 = \sum_{k\in\Z^d\setminus \{0\}} |k|^{-2\mu} |\hpsi_k|^2 < \infty,
\]
where $\hpsi_k = \psi(\phi_k)$. 
\end{definition}

The Sobolev spaces allow us to quantify approximation properties of 
the operators $P_n$ and $I_n$. We refer the reader to Chapter 8 in~\cite{MR1870713} for 
details. 

\begin{lemma}[Projection error]\label{lem:projerr}
For $\l,\mu\in \R$ with $\l\le \mu$ and $v\in H^\mu(\T^d;\C)$ we have 
\[
|v-P_n v|_\l \le \Big(\frac{n}{2}\Big)^{-(\mu-\l)} |v|_\mu . 
\]
\end{lemma}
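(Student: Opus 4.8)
The plan is to carry out the entire argument on the Fourier side. Since $\{\vphi^k : k\in\Z_n^d\}$ is an orthogonal basis of $\cS_n$ and $P_n$ is by definition the orthogonal $L^2$-projection onto $\cS_n$, the function $P_n v$ retains precisely the Fourier modes indexed by $\Z_n^d$. Consequently the error $v-P_n v$ has a Fourier expansion supported on the complementary index set $\Z^d\setminus\Z_n^d$, and both quantities appearing in the estimate reduce to weighted $\ell^2$-sums:
\[
|v-P_n v|_\l^2 = \sum_{k\in\Z^d\setminus\Z_n^d} |k|^{2\l}\,|\hv_k|^2,
\qquad
|v|_\mu^2 = \sum_{k\in\Z^d} |k|^{2\mu}\,|\hv_k|^2 .
\]
Phrasing it through orthogonality (rather than through the explicit truncation formula) has the advantage of sidestepping any normalization bookkeeping between the $L^2$ inner product and the coefficient sums.

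The one geometric fact I would isolate is this: if $k\in\Z^d\setminus\Z_n^d$, then at least one component $k_i$ violates $-n/2\le k_i\le n/2-1$, so $|k_i|\ge n/2$ and hence $|k|\ge n/2$. I would also record that $0\in\Z_n^d$, so the origin never occurs in the error expansion; this is exactly what makes negative values of $\l$ (and $\mu$) harmless, since the dangerous $k=0$ term is already captured by $P_n v$ and removed from the error.

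With these in hand, the estimate follows by a monotonicity argument: I would write $|k|^{2\l}=|k|^{-2(\mu-\l)}\,|k|^{2\mu}$, and since $\mu-\l\ge0$ the factor $|k|^{-2(\mu-\l)}$ is nonincreasing in $|k|$, so on the error index set it is bounded above by $(n/2)^{-2(\mu-\l)}$. Pulling this constant out of the sum and then enlarging the remaining sum from $\Z^d\setminus\Z_n^d$ back to all of $\Z^d$ recovers $|v|_\mu^2$; taking square roots yields the claimed bound. The main (and essentially only) obstacle is the index bookkeeping—correctly identifying the support of the error and verifying $|k|\ge n/2$ on the complement—after which there is no analytic difficulty, the result being a direct consequence of orthogonality together with monotonicity of the weight $|k|^{-2(\mu-\l)}$.
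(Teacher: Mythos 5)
Your proof is correct. The paper does not actually prove this lemma---it defers to Chapter~8 of \cite{MR1870713}---and your argument (orthogonality of $P_n$ so that the error is supported on $\Z^d\setminus\Z_n^d$, the bound $|k|\ge n/2$ on that complement, and monotonicity of the weight $|k|^{-2(\mu-\l)}$) is exactly the standard tail estimate underlying the cited result, including the correct observation that $0\in\Z_n^d$ keeps the $k=0$ mode out of the error and so makes negative orders $\l,\mu$ harmless.
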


By comparing $P_n$ and $I_n$ we obtain a trigonometric interpolation 
estimate. It is shown in Remark~8.3.1 of~\cite{MR1870713} that the conditions of the following
result cannot be improved in general. 

\begin{lemma}[Interpolation error]\label{lem:interpE}
If $\mu>d/2$, $0\le \l\le \mu$, and $v\in H^\mu(\T^d;\C)$ we have
\[
|v-I_nv|_\l \le c_{d,\l,\mu} \Big(\frac{n}{2}\Big)^{-(\mu-\l)}
|v|_\mu
\]
with a constant $c_{d,\l,\mu}>0$ that is independent of $v$ and $n$.
\end{lemma}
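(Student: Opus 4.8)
The plan is to avoid working with the interpolant directly and instead to measure it against the $L^2$ projection $P_n$, for which Lemma~\ref{lem:projerr} already supplies the desired estimate. Writing
\[
v - I_n v = (v - P_n v) + (P_n v - I_n v)
\]
and applying the triangle inequality for the seminorm $|\cdot|_\l$, the first term is controlled immediately by Lemma~\ref{lem:projerr}, giving $|v-P_nv|_\l \le (n/2)^{-(\mu-\l)}|v|_\mu$. The whole difficulty therefore concentrates on the \emph{aliasing error} $P_n v - I_n v \in \cS_n$, and the goal is to bound $|P_n v - I_n v|_\l$ by the same power of $n/2$.

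First I would record the aliasing identity relating the discrete coefficients $\tv_k$ to the continuous ones $\hv_k$. Since $\mu > d/2$, the Sobolev embedding $H^\mu(\T^d;\C)\hookrightarrow C(\T^d;\C)$ guarantees that $v$ is continuous with absolutely convergent Fourier series, so evaluation at the grid nodes $x_j$ is legitimate and the order of summation may be exchanged. Inserting the Fourier representation of $v$ into $\tv_k = (v,\vphi^k)_n$ and using that the discrete exponential sum over $\jinNnd$ equals $n^d$ when $\ell\equiv k$ componentwise modulo $n$ and vanishes otherwise, I obtain
\[
\tv_k = \sum_{m\in\Z^d}\hv_{k+nm}, \qquad \tv_k - \hv_k = \sum_{m\in\Z^d\smz}\hv_{k+nm}.
\]
Because the continuous Fourier coefficients of $P_n v$ and of $I_n v$ at $k\in\Z_n^d$ are exactly $\hv_k$ and $\tv_k$ (and vanish off $\Z_n^d$), the seminorm collapses to a coefficient sum,
\[
|P_n v - I_n v|_\l^2 = \sum_{\kinZnd}|k|^{2\l}\,\Big|\sum_{m\in\Z^d\smz}\hv_{k+nm}\Big|^2 ,
\]
which is independent of the precise normalization constant.

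The crux, and the step I expect to be the main obstacle, is to estimate this double sum with the correct power of $n$. I would apply Cauchy--Schwarz to the inner sum with the weight $|k+nm|^{\pm\mu}$, reducing matters to the uniform pointwise bound
\[
|k|^{2\l}\sum_{m\in\Z^d\smz}|k+nm|^{-2\mu}\le c\,(n/2)^{-2(\mu-\l)} \qquad\text{for all } k\in\Z_n^d .
\]
This rests on the elementary geometric estimate that for $k\in\Z_n^d$ and any $m$ every component obeys $|k_i+nm_i|\ge (n/2)|m_i|$, whence $|k+nm|\ge (n/2)|m|$. Together with $\mu>d/2$ this makes $\sum_{m\in\Z^d\smz}|k+nm|^{-2\mu}\le (n/2)^{-2\mu}\sum_{m\in\Z^d\smz}|m|^{-2\mu}=:(n/2)^{-2\mu}C_{d,\mu}$ convergent and independent of $k$; combining with $|k|^{2\l}\le d^{\l}(n/2)^{2\l}$ gives $c=d^\l C_{d,\mu}$. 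It is precisely here that $\mu>d/2$ is used twice — for the embedding that makes $I_n v$ well defined and for convergence of $C_{d,\mu}$ — which explains why the hypothesis cannot be relaxed. Finally, since the reindexing $(k,m)\mapsto \ell=k+nm$ is a bijection of $\Z_n^d\times\Z^d$ onto $\Z^d$, the remaining factor from Cauchy--Schwarz satisfies
\[
\sum_{\kinZnd}\sum_{m\in\Z^d}|k+nm|^{2\mu}|\hv_{k+nm}|^2 = \sum_{\ell\in\Z^d}|\ell|^{2\mu}|\hv_\ell|^2 = |v|_\mu^2 ,
\]
and discarding $m=0$ only decreases it. This yields $|P_n v - I_n v|_\l \le \sqrt{d^\l C_{d,\mu}}\,(n/2)^{-(\mu-\l)}|v|_\mu$, and adding the projection estimate through the triangle inequality produces the claim with $c_{d,\l,\mu}=1+\sqrt{d^\l C_{d,\mu}}$, manifestly independent of $v$ and $n$.
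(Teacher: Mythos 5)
Your proof is correct, and there is nothing in the paper to compare it against: the paper does not prove this lemma at all, deferring instead to Chapter~8 of \cite{MR1870713} and only quoting Remark~8.3.1 there for sharpness of the hypotheses. Your argument --- splitting $v-I_nv$ through $P_nv$, deriving the aliasing identity $\tv_k=\sum_{m\in\Z^d}\hv_{k+nm}$ from discrete orthogonality of the exponentials, and then combining Cauchy--Schwarz with the componentwise bound $|k_i+nm_i|\ge (n/2)|m_i|$, the convergence of $\sum_{m\in\Z^d\smz}|m|^{-2\mu}$ for $\mu>d/2$, and the bijection $(k,m)\mapsto k+nm$ of $\Z_n^d\times\Z^d$ onto $\Z^d$ --- is exactly the standard argument underlying the cited result, and every use of the hypotheses (the Sobolev embedding and summability both requiring $\mu>d/2$, and $\l\ge 0$ for $|k|^{2\l}\le d^{\l}(n/2)^{2\l}$) is accounted for.
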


We conclude the section with an inverse estimate. Particularly,
for every function $v_n \in \cS_n$ and $r\ge s$ we have that
\begin{equation}\label{eq:inv_est}
|v_n|_r \le \max_{\kinZnd} |k|^{r-s} |v_n|_s \le \Big(\frac{n}{2}\Big)^{r-s} |v_n|_s.
\end{equation}

\section{Fractional Laplace operator}\label{s:fLap}
We define subspaces of Sobolev spaces via
\[
\oH^r(\T^d;\C) = \big\{v\in H^r(\T^d;\C): 
\hv_0 = 0\big\}.
\]
For $r\ge 0$ the subspaces consist of Sobolev functions with
vanishing mean. On the subspaces $\oH^r(\T^d;\C)$ the corresponding seminorms 
$|\cdot|_r$ are norms. 

\begin{definition} 
For $s,\mu \ge 0$ and $v\in H^\mu(\T^d;\C)$ 
the {\em fractional Laplacian} of $v$ is the  
(generalized) function $(-\Delta)^s v \in \oH^{\mu-2s}(\T^d;\C)$ defined by
\[
(-\Delta)^s v = \frac{1}{(2\pi)^d} \sum_{k\in\Z^d \smz} |k|^{2s} \hv_k \vphi^k.
\]
\end{definition}

Given $f\in L^2(\T^d;\C)$ with vanishing mean 
the {\em fractional Poisson problem} seeks $u\in \oH^{s}(\T^d;\C)$ with 
\begin{equation}\label{fPcontPDE}
(-\Delta)^s u = f.
\end{equation}
The unique solution to \eqref{fPcontPDE} is given by
\begin{equation}\label{eq:fPcont}
u = \frac{1}{(2\pi)^d}\sum_{k\in\Z^d \smz} |k|^{-2s} \widehat{f}_k \vphi^k,
\end{equation}
and in fact satisfies $u\in \oH^{2s}(\T^d;\C)$. More generally, 
for $f\in \oH^\mu(\T^d;\C)$ we have 
\[
|u|_{\mu+2s} = |f|_\mu,
\]
i.e., the fractional Laplacian defines an isometric isomorphism
\[
(-\Delta)^s : \oH^r(\T^d;\C) \to \oH^{r-2s}(\T^d;\C).
\]
We define the fractional Laplace operator of negative order
as the inverse of $(-\Delta)^s$, i.e., 
\[
(-\Delta)^{-s} = \big((-\Delta)^s\big)^{-1} : 
\oH^r(\T^d;\C) \to \oH^{r+2s}(\T^d;\C).
\]
Note that for $r,s\in \R$ and $v\in \oH^r(\T^d;\C)$ with 
$(-\Delta)^s v\in L^2(\T^d;\C)$ we have 
\[
|v|_s = \|(-\Delta)^{s/2} v\|.
\]
If $s\le r$ we have the continuous embedding property 
\begin{equation}\label{eq:cont_embed}
|v|_s \le |v|_r.
\end{equation}

The discretized fractional Poisson problem seeks for a given
$f_n \in \cS_n$ with vanishing mean a function $u_n \in \cS_n$ with
\begin{equation}\label{fPdiscPDE}
(-\Delta)^s u_n = f_n.
\end{equation}
The uniquely defined solution is given by 
\begin{equation}\label{eq:fPdisc}
u_n = \frac{1}{(2\pi)^d} \sum_{\kinZnd \smz} |k|^{-2s} \tf_k \vphi^k . 
\end{equation}
There are two noticeable differences between the continuous 
\eqref{eq:fPcont} and the discrete solutions \eqref{eq:fPdisc}. 
Besides the finite and the infinite sums, $u_n$ contains the discrete 
Fourier coefficients $\tf_k$ and $u$ contains the continuous Fourier 
coefficients $\widehat{f}_k$. The following a priori error estimates
hold. 

\begin{theorem}\label{thm:fracPest}
Let $u$ and $u_n$ solve the continuous \eqref{fPcontPDE} and the 
discrete \eqref{fPdiscPDE} problems, respectively. We have 
\[
|u-u_n|_s \le |f-f_n|_{-s}.
\]
In particular, if $f\in \oH^\mu(\T^d;\C)$ and $f_n = P_nf$ we have 
\[
|u-u_n|_s \le \Big(\frac{n}2\Big)^{-(\mu+s)} |f|_\mu, 
\]
while if $f \in \oH^\nu(\T^d;\C)$ with $\nu > d/2$ and $f_n = I_n f$ 
we have 
\[
|u-u_n|_s \le c_{d,0,\nu} \Big(\frac{n}{2}\Big)^{-\nu} |f|_\nu.
\]
\end{theorem}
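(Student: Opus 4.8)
The plan is to establish the three estimates in sequence, starting from the key observation that both solutions have an explicit Fourier representation, so the error $u-u_n$ can be analyzed coefficient by coefficient. First I would write down the Fourier coefficients of the error. Since $u$ solves the continuous problem we have $\hv_k = |k|^{-2s}\hf_k$ for the continuous solution, while $u_n$ solves the discrete problem with coefficients $|k|^{-2s}\tf_k$ for $k\in\Z_n^d\smz$. Rather than comparing these coefficient representations directly (which is awkward because one uses $\hf_k$ and the other $\tf_k$), I would exploit the isometric isomorphism property of the fractional Laplacian proved earlier: applying $(-\Delta)^s$ to $u-u_n$ and measuring in the appropriate dual norm.

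\medskip

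\noindent\textbf{Step 1 (abstract error estimate).}
The cleanest route to the first inequality is to observe that $(-\Delta)^s(u-u_n)=f-f_n$ holds when we regard $u_n$ as the solution of \eqref{fPdiscPDE} and extend both problems to the same space. Since $(-\Delta)^s:\oH^r\to\oH^{r-2s}$ is an isometric isomorphism, applying it with the shift that sends the $|\cdot|_s$ norm to the $|\cdot|_{-s}$ norm gives
\[
|u-u_n|_s = |(-\Delta)^s(u-u_n)|_{-s} = |f-f_n|_{-s},
\]
except that one must be careful: $u_n$ is the discrete solution, so $(-\Delta)^s u_n = f_n$ only as an identity in $\cS_n$ using discrete coefficients. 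The honest statement is $|u-u_n|_s \le |f-f_n|_{-s}$, and I expect the inequality (rather than equality) to arise precisely from the discrepancy between $\tf_k$ and $\hf_k$ on $\Z_n^d$ together with the high-frequency tail $k\notin\Z_n^d$. I would verify this by splitting the sum defining $|u-u_n|_s^2$ over $\kinZnd\smz$ and over $k\notin\Z_n^d$, bounding each piece by the corresponding piece of $|f-f_n|_{-s}^2$, using $|k|^{2s}|k|^{-4s}=|k|^{-2s}$ on the low frequencies and the vanishing of $u_n$'s coefficients on the high frequencies.

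\medskip

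\noindent\textbf{Step 2 (the two concrete rates).}
Once the abstract bound $|u-u_n|_s\le|f-f_n|_{-s}$ is in hand, the remaining two estimates follow by inserting the two choices of $f_n$. For $f_n=P_nf$, I would apply Lemma~\ref{lem:projerr} (Projection error) with $\l=-s$ and $\mu=\mu$, which directly yields $|f-P_nf|_{-s}\le(n/2)^{-(\mu+s)}|f|_\mu$; combining with Step~1 gives the second inequality. For $f_n=I_nf$ with $\nu>d/2$, I would apply Lemma~\ref{lem:interpE} (Interpolation error) with $\l=-s$ and $\mu=\nu$ to get $|f-I_nf|_{-s}\le c_{d,0,\nu}(n/2)^{-(\nu+s)}|f|_\nu$, which is even slightly stronger than the claimed $(n/2)^{-\nu}$ rate; so the stated bound follows a fortiori (here one notes the theorem's exponent $-\nu$ versus the sharper $-(\nu+s)$, so I would double-check whether the authors intend $\l=0$ in the interpolation lemma, measuring $|f-I_nf|$ in the $L^2$-type norm, which would give exactly the $(n/2)^{-\nu}$ rate stated).

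\medskip

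\noindent\textbf{Main obstacle.}
The main subtlety is Step~1: justifying that the error in the $|\cdot|_s$ norm is controlled by the source error in the $|\cdot|_{-s}$ norm, despite the fact that $u_n$ uses discrete Fourier coefficients $\tf_k$ (which involve aliasing) rather than the continuous coefficients $\hf_k$. The clean isometry argument suggests equality, but aliasing on $\Z_n^d$ and the neglected tail frequencies are what turn this into an inequality and make the choice $f_n=I_nf$ behave well. I would resolve this by carefully identifying $(-\Delta)^s u_n$ with $f_n$ as elements of $\cS_n\subset\oH^{-s}$ and treating $u-u_n$ as a genuine element of $\oH^s$, then invoking the isometry $(-\Delta)^{s/2}$-based identity $|v|_s=\|(-\Delta)^{s/2}v\|$ together with the embedding \eqref{eq:cont_embed} to absorb any sign or truncation terms into an inequality.
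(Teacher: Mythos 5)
Your overall structure is sound, and Step~1, once cleaned up, is a valid (in fact slightly different) route from the paper's. The aliasing worry that you describe as the ``main obstacle'' is a red herring: the discrete problem \eqref{fPdiscPDE} takes as data a function $f_n\in\cS_n$, and for a function already in $\cS_n$ the discrete Fourier coefficients coincide with the continuous ones (since $(v_n,w_n)=(V,W)_n$ on $\cS_n$). Hence $(-\Delta)^s u_n=f_n$ is an exact identity in $\oH^{-s}(\T^d;\C)$, so $(-\Delta)^s(u-u_n)=f-f_n$ exactly, and the isometry property gives the \emph{equality} $|u-u_n|_s=|f-f_n|_{-s}$ with no frequency splitting needed. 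Aliasing enters only through the particular choice $f_n=I_nf$, and it is then entirely contained in the term $|f-I_nf|_{-s}$. The paper instead tests the error equation with $u-u_n$ and uses the duality bound $(f-f_n,u-u_n)\le|f-f_n|_{-s}|u-u_n|_s$; the content is the same, and both arguments are correct.

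The genuine gap is in your treatment of the third estimate. Your primary plan is to apply Lemma~\ref{lem:interpE} with $\l=-s$, concluding a ``slightly stronger'' rate $(n/2)^{-(\nu+s)}$. But that lemma requires $0\le\l\le\mu$, so $\l=-s<0$ is outside its hypotheses, and the paper explicitly notes (citing Remark~8.3.1 of \cite{MR1870713}) that these conditions cannot be improved in general: unlike the $L^2$-projection $P_n$, the trigonometric interpolant does not gain extra orders of convergence in negative norms. So the stronger rate is not available and that step would fail as written. The correct argument is the one you mention only as a hedge: first use the embedding \eqref{eq:cont_embed} in the form $|f-I_nf|_{-s}\le\|f-I_nf\|$, and then apply Lemma~\ref{lem:interpE} with $\l=0$, which yields precisely the stated rate
\[
|u-u_n|_s \le |f-I_nf|_{-s} \le \|f-I_nf\| \le c_{d,0,\nu}\Big(\frac{n}{2}\Big)^{-\nu}|f|_\nu .
\]
This two-step bound (embedding into the $L^2$ norm, then interpolation with $\l=0$) is exactly the paper's proof, and it also explains why the exponent in the theorem is $-\nu$ rather than $-(\nu+s)$.
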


\begin{proof}
In view of \eqref{fPcontPDE} and \eqref{fPdiscPDE} we have 
\begin{align*}
|u-u_n|_s^2 &= \big((-\Delta)^s (u-u_n), u-u_n \big) \\
&= \big(f-f_n, u-u_n\big)  \\
&\le |f-f_n|_{-s} |u-u_n|_s.
\end{align*}
This implies the general estimate and in combination with 
Lemma~\ref{lem:projerr} the estimate in case $f_n = P_nf$.
With~\eqref{eq:cont_embed}
 and Lemma~\ref{lem:interpE} we deduce that 
\[
|f-I_nf|_{-s} \le \|f-I_nf\|
\le c_{d,0,\nu} \Big(\frac{n}{2}\Big)^{-\nu} |f|_\nu
\]
which implies the estimate. 
\end{proof}

\section{Fractional image denoising}\label{s:Image}
Our second problem is a replacement of the ROF image denoising model \eqref{eq:E}. 
Given an image $g \in L^2(\mathbb{T}^d;\C)$ we propose to minimize 
\begin{equation}\label{eq:EMI}
 E(u) = \frac{1}{2}\|(-\Delta)^{s/2}u\|^{2} + \frac{\a}{2} \|(-\Delta)^{-\b/2}(u-g)\|^2 ,
\end{equation}
with $0 < s <1$ and $\b\in [0,1]$. The minimization is carried out over $H^s(\T^d;\C)$
when $\beta=0$ and over $\oH^s(\T^d;\C)$ when $\b$ is positive. In the latter case 
we assume that $g$ has a vanishing mean.
The existence and uniqueness of a minimizer follows by using the direct method 
in the calculus of variations. The first order necessary and 
sufficient optimality condition determines the unique minimizer $u$ via 
\begin{equation}\label{eq:EMOI}
 (-\Delta)^s u + \a (-\Delta)^{-\b} u = \a (-\Delta)^{-\b} g \quad \mbox{in } \mathbb{T}^d .
\end{equation}
We note that since $g\in L^2(\T^d;\C)$, we have $u \in H^{2(s+\b)}(\T^d;\C)$.
In particular, the solution to \eqref{eq:EMOI} is
\[
u = \frac{\a}{(2\pi)^d}\sum_{k\in\Z^d} 
\big(|k|^{2(s+\b)}+\a \big)^{-1}  \widehat{g}_k \vphi^k.
\]

The discretized problem seeks for a given $g_n \in \cS_n$ a 
function $u_n \in \cS_n$
with 
\begin{equation}\label{fPdiscPDE_image}
(-\Delta)^s u_n + \a (-\Delta)^{-\b} u_n = \a (-\Delta)^{-\b} g_n .
\end{equation}
The uniquely defined solution is given by
\[ 
u_n = \frac{\alpha}{(2\pi)^d}\sum_{\kinZnd} 
\big(|k|^{2(s+\b)}+\alpha \big)^{-1}  \widetilde{g}_k \vphi^k   
\]

\begin{theorem}
Let $u$ and $u_n$ solve the continuous and the discrete problems
\eqref{fPdiscPDE_image} and \eqref{eq:EMOI}, respectively. We have that
\[
|u-u_n|_s^2 + \frac{\a}{2} |u-u_n|_{-\b}^2 \le \frac{\a}{2} |g-g_n|_{-\b}^2.
\]
In particular, if $g\in H^\mu(\T^d;\C)$ and $g_n = P_ng$ we have 
\[
|u-u_n|_s + (\a/2)^{1/2} |u-u_n|_{-\b}
 \le \a^{1/2} \Big(\frac{n}{2}\Big)^{-(\mu+\b)} |g|_\mu,
\]
while if $g \in H^\nu(\T^d;\C)$ with $\nu > d/2$ and $g_n = I_n g$ 
we have 
\[
|u-u_n|_s + (\a/2)^{1/2} |u-u_n|_{-\b}
\le \a^{1/2} c_{d,0,\nu} \Big(\frac{n}{2}\Big)^{-\nu} |g|_\nu.
\]
\end{theorem}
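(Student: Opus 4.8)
The plan is to recast both problems variationally and argue as in the proof of Theorem~\ref{thm:fracPest}, the only new feature being the extra zeroth-order (in the operator sense) term $\a(-\Delta)^{-\b}$, which I will control by an absorption step. Introduce the symmetric bilinear form
\[
a(w,v) = \big((-\Delta)^{s/2}w,(-\Delta)^{s/2}v\big) + \a\big((-\Delta)^{-\b/2}w,(-\Delta)^{-\b/2}v\big),
\]
so that $a(w,w) = |w|_s^2 + \a|w|_{-\b}^2$ and, by the self-adjointness identity $\big((-\Delta)^r w,v\big) = \big((-\Delta)^{r/2}w,(-\Delta)^{r/2}v\big)$, the continuous equation \eqref{eq:EMOI} and the discrete equation \eqref{fPdiscPDE_image} read $a(u,v) = \a\big((-\Delta)^{-\b/2}g,(-\Delta)^{-\b/2}v\big)$ and $a(u_n,v_n) = \a\big((-\Delta)^{-\b/2}g_n,(-\Delta)^{-\b/2}v_n\big)$ when paired against admissible test functions.

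First I would establish the energy identity. Pairing \eqref{eq:EMOI} with $u-u_n$ gives $a(u,u-u_n) = \a\big((-\Delta)^{-\b/2}g,(-\Delta)^{-\b/2}(u-u_n)\big)$, which is admissible since $u-u_n$ lies in the energy space $H^s(\T^d;\C)$ (in $\oH^s(\T^d;\C)$, with vanishing mean, when $\b>0$). Pairing \eqref{fPdiscPDE_image} with the same $u-u_n$ gives $a(u_n,u-u_n) = \a\big((-\Delta)^{-\b/2}g_n,(-\Delta)^{-\b/2}(u-u_n)\big)$; although $u-u_n\notin\cS_n$, this is legitimate because $(-\Delta)^s u_n$, $(-\Delta)^{-\b}u_n$ and $(-\Delta)^{-\b}g_n$ all have Fourier support in $\Z_n^d$, so only the $\Z_n^d$-modes of $u-u_n$ enter either side. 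Subtracting yields
\[
|u-u_n|_s^2 + \a|u-u_n|_{-\b}^2 = \a\big((-\Delta)^{-\b/2}(g-g_n),(-\Delta)^{-\b/2}(u-u_n)\big).
\]

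Then I would bound the right-hand side by Cauchy--Schwarz as $\a\,|g-g_n|_{-\b}\,|u-u_n|_{-\b}$ and split it by Young's inequality into $\tfrac{\a}{2}|g-g_n|_{-\b}^2 + \tfrac{\a}{2}|u-u_n|_{-\b}^2$; absorbing the last term on the left gives exactly $|u-u_n|_s^2 + \tfrac{\a}{2}|u-u_n|_{-\b}^2 \le \tfrac{\a}{2}|g-g_n|_{-\b}^2$. To reach the two concrete estimates I would pass to additive form via $A+B\le\sqrt2\,(A^2+B^2)^{1/2}$ with $A=|u-u_n|_s$ and $B=(\a/2)^{1/2}|u-u_n|_{-\b}$, obtaining $|u-u_n|_s+(\a/2)^{1/2}|u-u_n|_{-\b}\le\a^{1/2}|g-g_n|_{-\b}$, and then estimate $|g-g_n|_{-\b}$: for $g_n=P_ng$ by Lemma~\ref{lem:projerr} with $\l=-\b\le\mu$, and for $g_n=I_ng$ by first using the embedding \eqref{eq:cont_embed} in the form $|g-I_ng|_{-\b}\le\|g-I_ng\|$ and then Lemma~\ref{lem:interpE} with $\l=0$, $\mu=\nu$.

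The one delicate point is the second pairing, since $u-u_n$ is not a trigonometric polynomial; the cleanest justification is the Fourier-support observation above, or equivalently to replace $u-u_n$ by $P_nu-u_n\in\cS_n$ in the discrete relation, the discarded part $u-P_nu$ being $a$-orthogonal to $\cS_n$. Everything else is routine and diagonal in Fourier, exactly as in Theorem~\ref{thm:fracPest}. As a sanity check one may instead verify the master inequality mode by mode: writing the $k$-th components of $u-u_n$ and $g-g_n$ through the common multiplier $\a/(|k|^{2(s+\b)}+\a)$ reduces the claim to $\a\big(t+\tfrac{\a}{2}\big)\le\tfrac12(t+\a)^2$ with $t=|k|^{2(s+\b)}$, whose difference is $\tfrac12 t^2\ge0$.
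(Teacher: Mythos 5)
Your proposal is correct and follows essentially the same route as the paper: test the difference of \eqref{eq:EMOI} and \eqref{fPdiscPDE_image} with $u-u_n$, apply Cauchy--Schwarz and Young's inequality to absorb $\frac{\a}{2}|u-u_n|_{-\b}^2$, pass to additive form via $(a+b)^2\le 2(a^2+b^2)$, and then bound $|g-g_n|_{-\b}$ by Lemma~\ref{lem:projerr} (for $P_n$) or by the embedding \eqref{eq:cont_embed} together with Lemma~\ref{lem:interpE} (for $I_n$), exactly as in Theorem~\ref{thm:fracPest}. Your additional justification of the pairing with $u-u_n\notin\cS_n$ via Fourier supports, and the mode-by-mode verification, are careful touches the paper leaves implicit, but they do not change the argument.
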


\begin{proof}
Testing the difference of~\eqref{eq:EMOI} and~\eqref{fPdiscPDE_image}
by $u-u_n$ implies that
\[\begin{split}
|u-u_n|_s^2 + \a |u-u_n|_{-\b}^2  &= \a \big((-\Delta)^{-\b}(g-g_n),u-u_n\big) \\
&\le \frac{\a}{2} |g-g_n|_{-\b}^2 + \frac{\a}{2} |u-u_n|_{-\b}^2.
\end{split}\]
The estimates follow from using $(a+b)^2 \le 2 (a^2 + b^2)$ and
arguing as in the proof of Theorem~\ref{thm:fracPest}.
\end{proof}

\section{Fractional phase field equations}\label{s:fracAC}

Given parameters $\a,s\ge 0$ we recall the fractional Cahn-Hilliard 
equation \eqref{eq:fracCH} 
\begin{equation}\label{eq:frac_ch}
(-\Delta)^{-\a} \p_t u + (-\Delta)^s u = -\veps^{-2} f(u) 
\end{equation}
on a $d$-dimensional torus $\T^d$ and with initial condition
$u(0)=u_0$. We recall that $\alpha=0$ gives rise to the fractional 
Allen--Cahn equation \eqref{eq:fracAC}. Below we will impose the restrictions $s>0$
and $s\ge \a$. 

We assume a splitting of the nonnegative potential
$F$ into convex and concave parts $F^{cx}$ and $F^{cv}$ which induces 
a decomposition of $f=F'$ into a monotone and an antimonotone 
part 
\[
f = f^{cx} + f^{cv}.
\]
We assume for simplicity that $f^{cx}$ and $f^{cv}$ are smooth
and Lipschitz continuous. The latter condition is justified by 
a maximum principle in the case for the Allen--Cahn equation and  
$L^\infty$ bounds for solutions of the Cahn--Hilliard equation \cite{MR1367359} 
corresponding to $(\a,s)=(0,1)$ and $(\a,s)=(1,1)$, respectively.

\subsection{Numerical scheme and error analysis}

The numerical scheme computes iterates $(u_n^k)_{k=0,\dots,K} \subset \cS_n$ via  
\begin{equation}\label{eq:discr_scheme}
(-\Delta)^{-\a} d_t u_n^k + (-\Delta)^s u_n^k 
+ \veps^{-2} I_n f^{cx}(u_n^k)
= - \veps^{-2} I_n f^{cv}(u_n^{k-1}).
\end{equation}
where $d_t w^k = (w^k-w^{k-1})/\tau$ with $\tau>0$ being the 
time step-size and $u_n^0$ is a suitable approximation of $u^0$. 
By applying the operator $(-\Delta)^\a$ and testing the resulting
identity with constant functions we observe the  
mass conservation property $(d_t u_n^k,1) = 0$ if $\a>0$.
Existence of the iterates is established via convex minimization
problems; if the convex part of $F$ is quadratic then $f^{cx}$ is 
linear and the scheme~\eqref{eq:discr_scheme}
defines a linear system of equations. 
The scheme is unconditionally energy stable in the sense that
we have 
\[
|d_t u_n^k |_{-\a}^2 + \frac{\tau}{2} |d_t u_n^k|_s^2 
+ d_t E_\veps^n(u_n^k) \le 0,
\]
with the discrete energy functional 
\[
E_\veps^n (v_n) = \frac12 \|(-\Delta)^{s/2} v_n\|^2 
+ \veps^{-2} \big(F(v_n),1\big)_n.
\]
This follows from testing~\eqref{eq:discr_scheme} with $d_t u_n^k$, 
using 
\[
\big((-\Delta)^s u_n^k, d_t u_n^k\big) 
= \frac12 d_t \|(-\Delta)^{s/2} u_n^k\|^2
+ \frac{\tau}{2} \|(-\Delta)^{s/2} d_t u_n^k\|^2,
\]
and noting that as a consequence of convexity and concavity we have 
\[\begin{split}
\big(f^{cx}(u_n^k), d_t u_n^k\big)_n & \ge \big(d_t F^{cx}(u_n^k),1\big)_n, \\
\big(f^{cv}(u_n^{k-1}), d_t u_n^k\big)_n & \ge \big(d_t F^{cv}(u_n^k),1\big)_n.
\end{split}\]
Assuming initial data with $E_\veps^n(u_n^0) \le c$ as $n\to \infty$,
the energy estimate provides a~priori bounds on interpolants of the
approximations which allows us to select an accumulation point 
\[
u\in H^1([0,T];H^{-\a}(\T^d))\cap L^\infty([0,T];H^s(\T^d))
\]
as $\tau\to 0$ and $n\to \infty$. Its identification as a solution 
for the fractional Cahn--Hilliard equation follows from the Aubin--Lions
lemma provided that $s>0$. Uniqueness of solutions is a consequence of the 
assumed Lipschitz continuity of $f$. 
For an error analysis we note that $u\in C([0,T];L^2(\T^d))$,
let $u^k = u(t_k)$ with $t_k = k\tau$, and define 
\[
e_n^k = u_n^k - P_n u^k,
\]
where $P_n$ is the orthogonal projection 
given in Definition~\ref{def:Pn}. 
Note that we have $(P_n v,w_n)_n = (P_nv,w_n)$ but $(v,w_n)_n\neq (v,w_n)$
unless $v$ belongs to~$\cS_n$. We omit the subscript $n$ whenever
the scalar product is applied to two functions belonging to $\cS_n$.
For ease of readability we abbreviate
\[
f_\veps = \veps^{-2} f, \quad f_\veps^{cx} = \veps^{-2} f^{cx}, \quad
f_\veps^{cv} = \veps^{-2} f^{cv}.
\]
The sequences $(u_n^k)$ and $(u^k)$ satisfy the discrete equations 
\[\begin{split}
(-\Delta)^{-\a} d_t u_n^k + (-\Delta)^s u_n^k & = -I_n f_\veps (u_n^k) 
- I_n \big(f_\veps^{cv}(u_n^{k-1})-f_\veps^{cv}(u_n^k)\big), \\
(-\Delta)^{-\a} d_t P_n u^k + (-\Delta)^s P_n u^k &= -P_n f_\veps(u^k) 
 + (-\Delta)^{-\a} P_n (d_t u^k -\p_t u^k),
\end{split}\]
where we used that $P_n$ commutes with $(-\Delta)^r$ for every $r\in \R$.
Subtracting the identities leads to the error equation
\[
(-\Delta)^{-\a} d_t e_n^k+ (-\Delta)^s e_n^k  = A_n^k + B_n^k + C_n^k,
\]
with the discretization errors 
\[\begin{split}
A_n^k &= - I_n f_\veps( u_n^k)+ P_n f_\veps(u^k),  \\
B_n^k &= - I_n \big(f_\veps^{cv}(u_n^{k-1})-f_\veps^{cv}(u_n^k)\big), \\
C_n^k &= - (-\Delta)^{-\a} P_n (d_t -\p_t) u^k.
\end{split}\]
Testing the error equation with $e_n^k$ shows that we have
\[\begin{split}
\frac12 d_t |e_n^k|_{-\a}^2 + \frac{\tau}{2} |d_t e_n^k|_{-\a}^2
+ |e_n^k|_s^2 
= (A_n^k,e_n^k) + (B_n^k,e_n^k) + (C_n^k,e_n^k).
\end{split}\]
To bound the first term on the right-hand side we insert $f_\veps(u_n^k)$,
use Lemma~\ref{lem:interpE}, the inverse estimate~\eqref{eq:inv_est},
and insert $P_n u^k$ to deduce with the Lipschitz continuity of $f_\veps$ that 
\[\begin{split}
(A_n^k,e_n^k) 
&= -\big(I_nf_\veps(u_n^k)- f_\veps( u_n^k) + f_\veps (u_n^k)- f_\veps (u^k), e_n^k\big)  \\
&\le c_{d,0,1} n^{-1} |f_\veps (u_n^k)|_1 \|e_n^k\| + |f_\veps|_{\Lip} \|u_n^k - u^k\| \|e_n^k\| \\
&\le c_{d,0,1} n^{-1} |f_\veps|_{\Lip} |u_n^k|_1 \|e_n^k\| 
+ |f_\veps|_{\Lip} \big(\|u_n^k - P_n u^k\| + \|P_n u^k - u^k\| \big)\|e_n^k\| \\
&\le |f_\veps|_{\Lip} \big( c_{d,0,1} n^{-s}  |u_n^k|_s + \|P_nu^k - u^k\|\big) \|e_n^k\|
+ |f_\veps|_{\Lip} \|e_n^k\|^2. 
\end{split}\]
For the second term we have that 
\[
(B_n^k,e_n^k) \le \tau |f_\veps|_{\Lip} \|d_t u_n^k\| \|e_n^k\|
\le \tau n^\a |f_\veps|_{\Lip} |d_t u_n^k|_{-\a} \|e_n^k\|,
\]
where we used the inverse estimate 
$\|w_n\| \le n^\a |w_n|_{-\a}$ if $\a>0$.  
For the third term we assume that $u\in C^2([0,T];H^{-\a}(\T^d))$ and estimate
\[
(C_n^k,e_n^k) = ((-\Delta)^{-\a} [\p_t -d_t] u^k, e_n^k) 
\le \frac{\tau}{2} \sup_{t\in [0,T]} |\p_t^2 u(t)|_{-\a} |e_n^k|_{\a}
\]
A combination of the estimates, multiplication by $\tau$, and 
summation over $k=1,2,\dots,K$, yield that 
\[\begin{split}
& \frac12 |e_n^K|_{-\a}^2 + \tau \sum_{k=1}^K |e_n^k|_s^2 \le \frac12 |e_n^0|_{-\a}^2   
+ K \tau |f_\veps|_{\Lip} \max_{k=1,\dots,K} \big(c_{d,0,1} n^{-s}  |u_n^k|_s +
 \|u^k-P_nu^k\|\big)^2 \\
& \quad  + K \tau^3 n^{2\a} |f_\veps|_{\Lip} \max_{k=1,\dots,K} |d_t u_n^k|_{-\a}^2 
+ K \tau^3 \sup_{t\in [0,T]} |\p_t^2 u(t)|_{-\a}^2 \\
& \quad + 3 |f_\veps|_{\Lip} \tau \sum_{k=1}^K \|e_n^k\|^2
+ \frac{\tau}{4} \sum_{k=1}^K |e_n^k|_\a^2.
\end{split}\]
If $\a=0$ we may directly apply the discrete Gronwall lemma to obtain 
an error estimate. If $\a>0$ we assume $\a \le s$, require that
$(u_n^0,1) = (u^0,1)$ so that $(e_n^k,1)=0$, and use the bound
\[
\|e_n^k\|^2 \le |e_n^k|_{-s} |e_n^k|_s \le |e_n^k|_{-\a} |e_n^k|_s,
\]
to deduce with Young's inequality the estimate 
\[
E_n^K = \frac12 |e_n^K|_{-\a}^2 + \frac{\tau}{2} \sum_{k=1}^K |e_n^k|_s^2  
\le D_0  + D_1 \tau \sum_{k=1}^K E_n^k.
\]
Here, $D_0$ is the sum of the first four terms on the right-hand side of the
above estimate and $D_1 = 2 (3 |f_\veps|_{\Lip})^2$.
The discrete Gronwall lemma leads to the estimate 
\[
E_n^K \le 2 D_0 \exp(D_1 T)
\]
for all $K$ with $K\tau \le T$ provided that $\tau D_1 \le 1/2$. 
With the triangle inequality and approximation estimates for $P_n$
we obtain the following error estimate.

\begin{theorem}
Let $u\in C([0,T];H^s(\T^d)) \cap C^2([0,T];H^{-\a}(\T^d))$ solve~\eqref{eq:fracCH}
and let the sequence $(u_n^k)_{k=0,\dots,K}\subset \cS_n$ be defined 
via~\eqref{eq:discr_scheme}. There exists a constant $c_\veps >0$ such that we have
\[
\max_{k=1,\dots,K} |u(t_k)-u_n^k|_{-\a} \le c_\veps (\tau + \tau n^\a +  n^{-s})
\]
for all $\tau>0$ and $n\in \N$. 
\end{theorem}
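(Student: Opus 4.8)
The plan is to estimate the computable error through the auxiliary quantity $e_n^k = u_n^k - P_n u^k$, because the triangle inequality splits $|u(t_k) - u_n^k|_{-\a} \le |u^k - P_n u^k|_{-\a} + |e_n^k|_{-\a}$, and the projection part $|u^k - P_n u^k|_{-\a}$ is immediately controlled by Lemma~\ref{lem:projerr} with $\l = -\a \le s = \mu$, giving an $O(n^{-(s+\a)})$ contribution that is dominated by $n^{-s}$. First I would write down the two discrete identities: the one satisfied by $u_n^k$ comes directly from the scheme~\eqref{eq:discr_scheme} after regrouping the convex/concave split as $I_n f_\veps^{cx}(u_n^k) + I_n f_\veps^{cv}(u_n^{k-1}) = I_n f_\veps(u_n^k) + I_n\big(f_\veps^{cv}(u_n^{k-1}) - f_\veps^{cv}(u_n^k)\big)$; the one satisfied by $P_n u^k$ comes from applying $P_n$ to the exact equation~\eqref{eq:fracCH} (using that $P_n$ commutes with every $(-\Delta)^r$) and replacing $\p_t u^k$ by the backward difference quotient $d_t u^k$. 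Subtracting them yields the error equation $(-\Delta)^{-\a} d_t e_n^k + (-\Delta)^s e_n^k = A_n^k + B_n^k + C_n^k$, where $A_n^k$ is the nonlinear consistency error, $B_n^k$ the time lag of the concave part, and $C_n^k$ the temporal truncation error.

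Next I would test the error equation with $e_n^k$, using the identity $\big((-\Delta)^{-\a} d_t e_n^k, e_n^k\big) = \tfrac12 d_t |e_n^k|_{-\a}^2 + \tfrac{\tau}{2}|d_t e_n^k|_{-\a}^2$, so that the left-hand side controls $\tfrac12 d_t|e_n^k|_{-\a}^2 + |e_n^k|_s^2$. The heart of the argument is bounding the three right-hand terms. For $A_n^k$ I would insert $f_\veps(u_n^k)$ and then $P_n u^k$, use the interpolation estimate of Lemma~\ref{lem:interpE} together with the inverse estimate~\eqref{eq:inv_est} (to turn $|f_\veps(u_n^k)|_1$ into a factor $n^{-s}|u_n^k|_s$) and the Lipschitz continuity of $f_\veps$, producing a bound of the form $|f_\veps|_{\Lip}\big(c\, n^{-s}|u_n^k|_s + \|u^k - P_n u^k\|\big)\|e_n^k\| + |f_\veps|_{\Lip}\|e_n^k\|^2$. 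For $B_n^k$ the backward time lag contributes a factor $\tau\|d_t u_n^k\|$, and converting this $L^2$ norm into the weak norm via the inverse estimate $\|w_n\| \le n^\a |w_n|_{-\a}$ is exactly what generates the $\tau n^\a$ term. For $C_n^k$ the assumed $C^2([0,T];H^{-\a})$-regularity bounds the truncation $(\p_t - d_t)u^k$ by $\tfrac{\tau}{2}\sup_t |\p_t^2 u(t)|_{-\a}$, paired against $e_n^k$ by duality.

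After multiplying by $\tau$ and summing over $k = 1, \dots, K$ (using $K\tau \le T$ and the a~priori stability bounds on $\max_k |u_n^k|_s$ and $\max_k |d_t u_n^k|_{-\a}$ furnished by the energy estimate), all explicit data terms assemble into a constant $D_0$ of size $O\big((\tau + \tau n^\a + n^{-s})^2\big)$, leaving the error terms $3|f_\veps|_{\Lip}\tau\sum_k\|e_n^k\|^2$ and $\tfrac{\tau}{4}\sum_k|e_n^k|_\a^2$ to be absorbed. This absorption is the main obstacle, and it is where the structural hypotheses $s>0$ and $\a\le s$ are essential: since $\a \le s$, the embedding~\eqref{eq:cont_embed} (valid on mean-zero functions) lets the $|e_n^k|_\a^2$ sum be dominated by $|e_n^k|_s^2$ and hidden in the left-hand side, while the genuinely weak error metric $|\cdot|_{-\a}$ clashes with the $L^2$ norm $\|e_n^k\|$ produced by the Lipschitz bounds. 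Enforcing the mean-zero property $(e_n^k,1)=0$ — guaranteed by the mass conservation of the scheme together with the matching condition $(u_n^0,1)=(u^0,1)$ — I would invoke the interpolation inequality $\|e_n^k\|^2 \le |e_n^k|_{-\a}\,|e_n^k|_s$ and Young's inequality to move the $\|e_n^k\|^2$ sum to the left as well, arriving at $E_n^K \le D_0 + D_1\tau\sum_{k=1}^K E_n^k$ with $E_n^K = \tfrac12|e_n^K|_{-\a}^2 + \tfrac{\tau}{2}\sum_{k=1}^K|e_n^k|_s^2$ and $D_1 = 2(3|f_\veps|_{\Lip})^2$. The discrete Gronwall lemma, valid under the mild step-size restriction $\tau D_1 \le 1/2$, then yields $E_n^K \le 2 D_0 \exp(D_1 T)$ uniformly in $K$; taking the maximum over $k$ and recombining with the projection estimate of Lemma~\ref{lem:projerr} via the triangle inequality gives the claimed bound $\max_k|u(t_k) - u_n^k|_{-\a} \le c_\veps(\tau + \tau n^\a + n^{-s})$, with all $\veps$-dependent constants collected into $c_\veps$.
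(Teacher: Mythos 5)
Your proposal is correct and follows essentially the same route as the paper's own argument: the same error decomposition $e_n^k = u_n^k - P_n u^k$ with consistency terms $A_n^k$, $B_n^k$, $C_n^k$, the same use of the interpolation and inverse estimates to produce the $n^{-s}$ and $\tau n^\a$ contributions, the same mean-zero interpolation trick $\|e_n^k\|^2 \le |e_n^k|_{-\a}|e_n^k|_s$ under $\a \le s$, and the same discrete Gronwall conclusion under $\tau D_1 \le 1/2$, finished by the triangle inequality and the projection estimate. No gaps; the argument matches the paper step for step.
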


The constant $c_\veps$, in general, depends exponentially on $\veps^{-1}$.
For the derivation of this estimate we used the indicated regularity assumption. 
By standard arguments, see~\cite{MR1479170} the regularity assumption on
the exact solution can be weakened to the conditions
\[
u\in L^\infty([0,T];H^s(\T^d)), \quad \p_t^2 u \in L^2([0,T];H^{-\a}(\T^d)).
\]
The suboptimal term $\tau n^\a$ corresponds to the semi-implicit
treatment of the nonlinearity which makes the scheme~\eqref{eq:discr_scheme}
fully practical. 

\subsection{Improved error estimate via spectral bounds}

A significantly improved error estimate can be obtained if additional 
analytical knowledge about the evolution is available, e.g., in the form of
lower bounds for the principal eigenvalue 
\[
\l(t) = \min_{v \in H^s(\T^d)}
\frac{| v|_s^2 + \veps^{-2} \big(f'(u(t))v,v\big)}
{|v|_{-\a}^2}.
\]
For ease of presentation we only consider the fractional Allen--Cahn
equation with $\a=0$ and outline the main arguments 
following~\cite{MR1971212,MR2764423}. We focus on the 
contribution to the error equation resulting from the nonlinearity 
and write it with abstract consistency functionals $\cC_n^k$ as 
\[
d_t e_n^k+ (-\Delta)^s e_n^k  
= \cC_n^k + \veps^{-2} P_n \big(f(u^k)-f(u_n^k)\big).
\]
A precise formula for $\cC_n^k$ is obtained from subtracting the
projected partial differential equation evaluated at $t_k$
onto $\cS_n$ from the numerical scheme as above, e.g., in case of
a fully implicit numerical scheme with a nodal interpolation of
the nonlinear term we have
\[\begin{split}
\cC_n^k &= -\veps^{-2} \big(P_n f(u_n^k)- I_n f(u_n^k)\big)
+ \veps^{-2} P_n f(u^k) + d_t P_n u^k + (-\Delta)^s P_n u^k \\
&= -\veps^{-2} \big(P_n f(u_n^k)- I_n f(u_n^k)\big) + P_n \big(d_t u^k - \p_t u(t_k)\big).
\end{split}\]
To relate the error equation to the principal eigenvalue we require  
a controlled failure of monotonicity for $f$ in the sense that 
there exists a constant $c_f>0$ such that for all $a,b\in \R$ we have
\[
\big(f(a)-f(b)\big)(a-b) \ge f'(a)(a-b)^2 - c_f |a-b|^3.
\]
With this estimate we deduce with $c_f' = \|f'\|_{L^\infty(\R)}$  
that 
\[\begin{split}
\frac12 d_t & \|e_n^k\|^2 + \frac{\tau}{2} \|d_t e^n_k\|^2 + |e_n^k|_s^2 \\
& = -\veps^{-2} \big(f(u^k) - f(u_n^k),e_n^k\big) + (\cC_n^k,e_n^k) \\
&\le -\veps^{-2} \big(f'(u^k) e_n^k,e_n^k \big) + c_f \veps^{-2} \|e_n^k\|_{L^3(\T^d)}^3
+ \frac{\veps^{-2}}{2} |\cC_n^k|_{-s}^2 + \frac{\veps^2}{2} |e_n^k|_s^2 \\
&\le - (1-\theta) \veps^{-2}  \big(f'(u^k) e_n^k,e_n^k \big) 
+ \theta \veps^{-2} c_f'\|e_n^k\|^2 \\
& \qquad + c_f \veps^{-2} \|e_n^k\|_{L^3(\T^d)}^3  
+ \frac{\veps^{-2}}{2} |\cC_n^k|_{-s}^2 + \frac{\veps^2}{2} |e_n^k|_s^2.
\end{split}\]
We incorporate the eigenvalue $\l^k = \l(t_k)$ via 
\[
-\veps^{-2} \big(f'(u^k)e_n^k,e_n^k\big) \le |e_n^k|_s^2  - \l^k \|e_n^k\|^2.
\]
Choosing $\theta = \veps^2$ and letting $\mu^k = \max\{-\l^k,0\}$
thus leads to the estimate 
\[\begin{split}
\frac12 d_t  \|e_n^k\|^2 + |e_n^k|_s^2 \le & (1-\veps^2) |e_n^k|_s^2  + \mu^k \|e_n^k\|^2 \\
&+  c_f' \|e_n^k\|^2 + c_f \veps^{-2} \|e_n^k\|_{L^3(\T^d)}^3
+ \frac{\veps^{-2}}{2} |\cC_n^k|_{-s}^2 + \frac{\veps^2}{2} |e_n^k|_s^2.
\end{split}\]
Rearranging terms gives 
\[\begin{split}
d_t & \|e_n^k\|^2 + \veps^2 |e_n^k|_s^2 
\le 2 \big(\mu^k + c_f') \|e_n^k\|^2 + 2 c_f \veps^{-2} \|e_n^k\|_{L^3(\T^d)}^3
+ \veps^{-2} |\cC_n^k|_{-s}^2.
\end{split}\]
In an inductive argument we may assume that 
$\veps^{-2} \|e_n^k\|_{L^3(\T^d)}^3 \le c \|e_n^k\|^2$
and use the discrete Gronwall lemma to obtain an error estimate that depends exponentially
on the principal eigenvalue $\l^k$. Hence, if $\l^k$ is uniformly bounded from 
below the resulting error estimate depends only algebraically on $\veps^{-1}$. 
More generally, it suffices to assume that a discrete time integral of 
$\l^k$ is uniformly bounded from below. This allows us to cover large classes
of evolutions including topological changes. 

\section{Numerical Examples}\label{s:numerics}
In this Section, we present several numerical examples.  
In Section~\ref{s:pp} we discuss 
the approximation of the fractional Poisson problem. Section~\ref{s:imaging}
is devoted to image denoising problem. In Section~\ref{s:phase} we study features
of the fractional Allen--Cahn equation. We conclude with experiments
for the fractional Cahn--Hilliard equation in Section~\ref{s:phase_CH}.

\subsection{Approximation of the Poisson problem}\label{s:pp}
To construct a nonsmooth solution for the fractional Poisson problem 
we first let $w\in C(\T)$ be defined via 
\[
w(x) = \begin{cases} 
x, & x\le \pi, \\
2\pi -x, & x\ge \pi.
\end{cases}
\]
Since $w(0)=w(2\pi)$ we find for $k\neq 0$ with an 
integration-by-parts that 
\[
\hw_k = \int_0^{2\pi} w(x) e^{-\ikx} \dv{x} 
= \frac{1}{\i k} (1-e^{-\i kx}) \int_0^\pi e^{-\ikx}\dv{x} 
= \frac{1}{(\i k)^2} \big(1-(-1)^k\big)^2,
\]
i.e., $\hw_k= -4/k^2$ if $k$ is odd and $\hw_k=0$ if $k$ is even.
We have $\hw_0 = \pi^2$.  We then let $u\in C(\T^d)$ be
for $x\in \T^d$ defined via 
\[
u(x) = \prod_{i=1}^d w(x_i) - \frac{\pi^{2d}}{(2\pi)^d}.
\]
We have $\hu_k = \hw_{k_1} \cdots \hw_{k_d}$ if $k \neq 0$ 
and $\hu_0= 0$. We set $f = (-\Delta)^s u$, i.e., for $\kinZnd$
let $\hf_k = |k|^{2s} \hu_k$. Note that $f\in L^2(\T^d)$ if and
only if $s<1/2$. 
We choose the approximation $f_n = P_nf$ which is explicitly 
available here. The output 
for $s=1/2$ and $n=16$ is displayed in Figure~\ref{fig:frac_poisson_a}. 
In contrast to solutions of the classical Poisson problem we observe
here the occurrence of kinks in the solution. 

\begin{figure}[h!]
\includegraphics[width=.99\linewidth,height=3.0625cm]{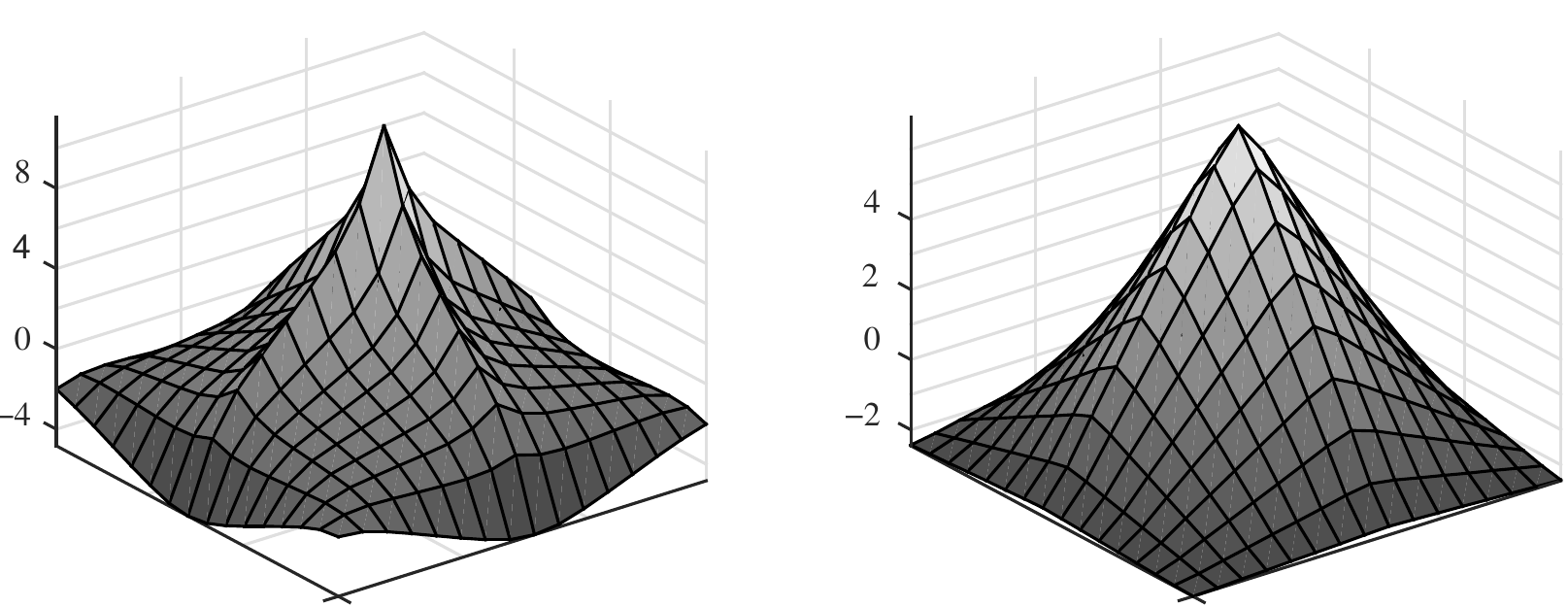}
\caption{\label{fig:frac_poisson_a} Functions $P_nf$ and $u_n$ 
for the fractional Poisson problem with $s=1/2$ and $n=16$.}
\end{figure}

\subsection{Fractional image denoising}\label{s:imaging}
The fractional Laplacian with $s<1/2$ is closely related to the
total variation norm (see Section~\ref{s:intro}) which motivates 
its application in 
image processing. Given a noisy image $g\in L^2(\T^d)$ we define
a regularized image $u\in H^{2s}(\T^d)$ via
\[
(-\Delta)^s u + \a (u-g) = 0.
\]
The fidelity parameter $\a$ penalizes the deviation of $u$ from
$g$ in the $L^2$ metric. Alternatively, this distance can be 
taken in the weaker metric of $H^{-1}(\T^d)$ which leads to
the equation 
\[
(-\Delta)^s u + \a (-\Delta)^{-1} (u-g) = 0,
\]
where we assume that $g$ has vanishing mean and look for
$u$ with vanishing mean. The results of two experiments are
displayed in the rows of Figure~\ref{fig:frac_poisson_d}. In the first 
experiment we set $s=0.42$, $\a=10$, and $n=1566$. 
In the second experiment we used $s=0.35$,  
$\a=5\times10^3$ and $n=256$. The first and second columns
display the original and noisy images, respectively. The third
and fourth columns show the results of $L^2$ and $H^{-1}$ fidelity.
We note that in the first example, where the additive noise 
is normally distributed with mean zero and standard deviation $0.15$,
the $L^2$-fidelity almost perfectly recovers the original image
reflecting the fact that for Gaussian noise this is statistically
the optimal choice.
In the second example where the noise is given by the nodal 
interpolant of the sinusoidal function 
\[
\xi(x_1,x_2) = 5\sin(20\pi x_1)\sin(20\pi x_2)
\]
we obtain better 
recovery using the $H^{-1}$-fidelity. We remark that it took 
0.2 sec and 0.006 sec to solve the first and the second problem 
in Matlab on a MacBook Pro with an 2.8 GHz Intel Core i7 processor 
(16 GB 1600 MHz DDR3 RAM).
 
\begin{figure}[h!]
\hspace*{-1.5cm}
\includegraphics[width=17cm,height=3.0cm]{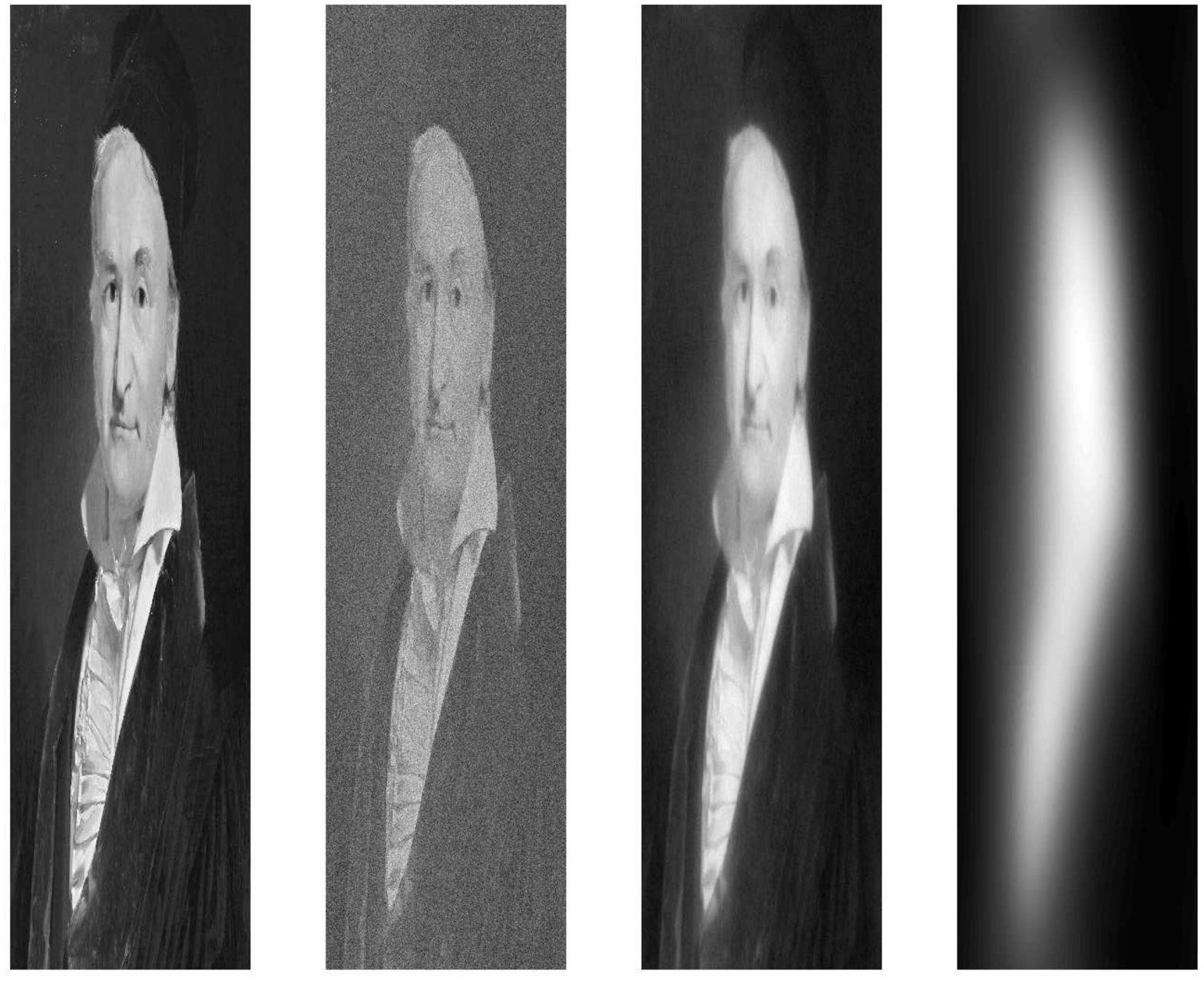}
\hspace*{-1.5cm}
\includegraphics[width=17cm,height=3.0cm]{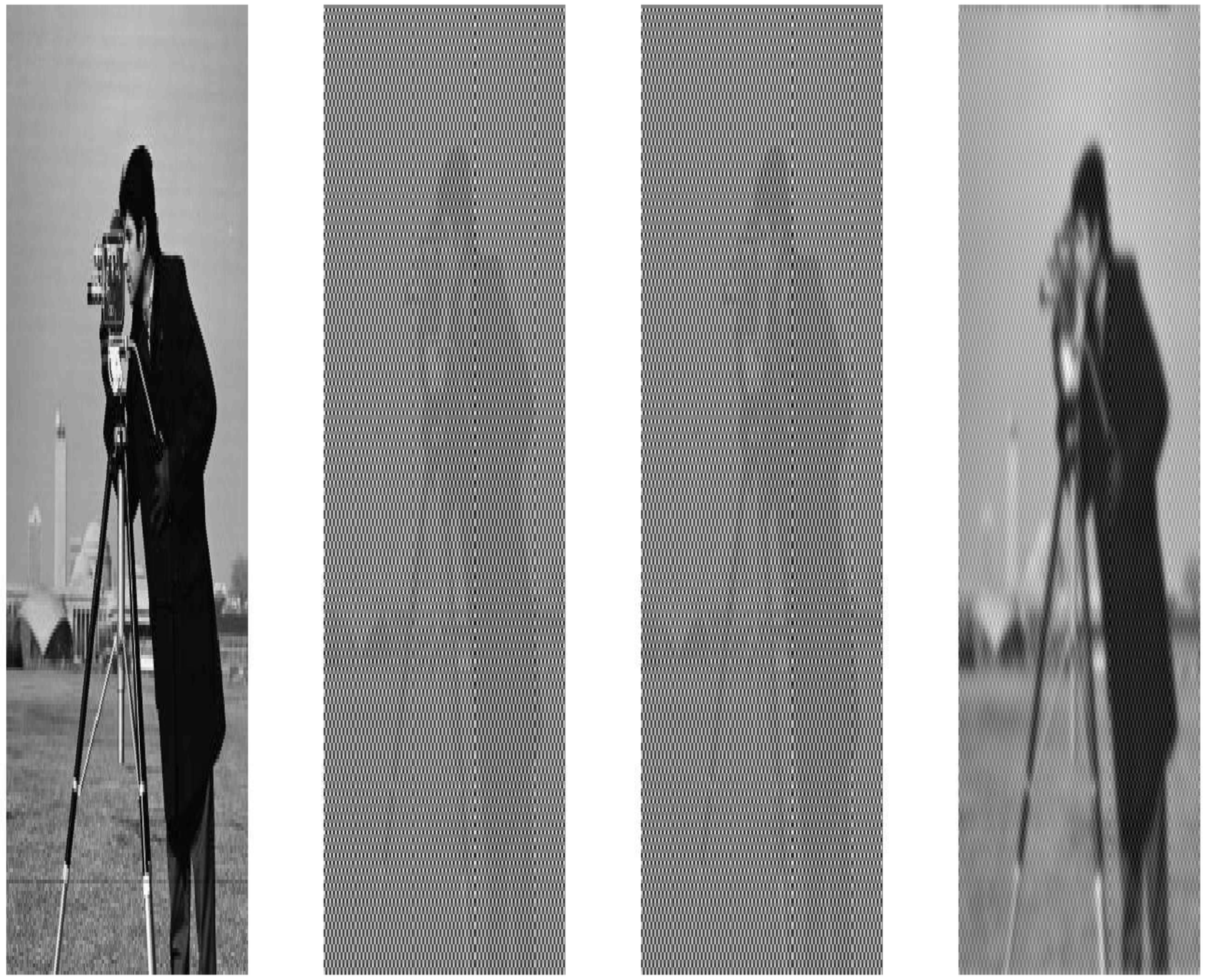}
\caption{\label{fig:frac_poisson_d} Original and noisy image,
regularized images for $L^2$ and $H^{-1}$ fidelity terms.
In the case of first example we have set $s=0.42$, 
$\a=10$, and $n=1566$. In the second 
example we have $s=0.35$, $\a=5\times10^3$ and $n=256$. 
The recovery is satisfactory using the $L^2$-fidelity in the first 
example. In the second example, recovery is better in case 
of the $H^{-1}$-fidelity term.}
\end{figure}

\subsubsection{Comparison between fractional and total variation}

We next compare the fractional and the total variation (TV) based models  
\eqref{eq:E} with the help of three examples. This comparison was carried out 
in Python which was specifically chosen due to the availability of SciKit-Image 
toolbox \cite{scikit-image}. 
In all the examples we assume that original image denoted by $o$ (without
noise) is known. For the fractional case we compute the optimal parameters 
$(s,\a)$ by solving a minimization problem: $\min_u$ $\|u-o\|^2$, subject to $u$ 
solving  
$
(-\Delta)^s u + \a (u-g) = 0 . 
$
We further assume that $(s,\a)$ lies in a closed convex set, i.e., 
$0.05 \le s \le 0.5$ and $1 \le \a \le 50$. We solve this optimization problem
using an in-built optimization algorithm in Python. The corresponding 
optimization problem for TV is solved for $\a$ using a genetic  algorithm. 

Our first example uses a picture of Gauss (cf.~Figure~\ref{f:circ}, top row). The
left image is the original image with $n = n_x = n_y = 1566$. Our second
example (cf.~Figure~\ref{f:circ}, middle row) uses a synthetic image 
with $n = n_x = n_y = 1500$. Our final 
example (cf.~Figure~\ref{f:circ}, bottom row)
is based on an in-built image from SciKit with different number 
of points in the $x$ and $y$ directions, i.e., $n_x = 400$, $n_y = 600$. We note that 
even though the approach discussed in Section~\ref{s:Image} assumes $n = n_x = n_y$ 
it is directly extended to this case where $n_x \neq n_y$. In the second
column (from the left), in all the examples, we have added a normally distributed 
noise with mean zero and standard deviation $0.15$, we denote the resulting noisy image
by $g$.

For the fractional case the optimal parameters are $(s,\a) = (0.49,50)$ for the first 
example and $(s,\a) = (0.5,50)$ for the second and third examples. On the other hand, for TV 
case the optimal parameters are $\a = 3.58$, $\a = 1.0$, and $\a = 8.74$, respectively. 
Using these  parameters we solve the corresponding image denoising problems using the 
fractional approach (third column from the left) and compare the results with the in-built TV 
algorithm from \cite{scikit-image}. We observe that the two approaches give comparable
results in all three examples. The fractional approach is significantly
cheaper as only two discrete Fourier transformations have to be computed. The CPU times 
for implementations of the fractional and TV approach in Matlab and Python, 
respectively, are provided in the caption of Figure~\ref{f:circ}. They show a 
reduction of the computing times by factors 10-100. We remark that certain aspects in the Chambolle--Pock algorithm
implemented in Scikit such as the specification of a suitable stopping
criterion and choice of step-sizes may lead to different results.  

\begin{figure}[h!]
\centering
\includegraphics[width=14cm,height=2.8cm]{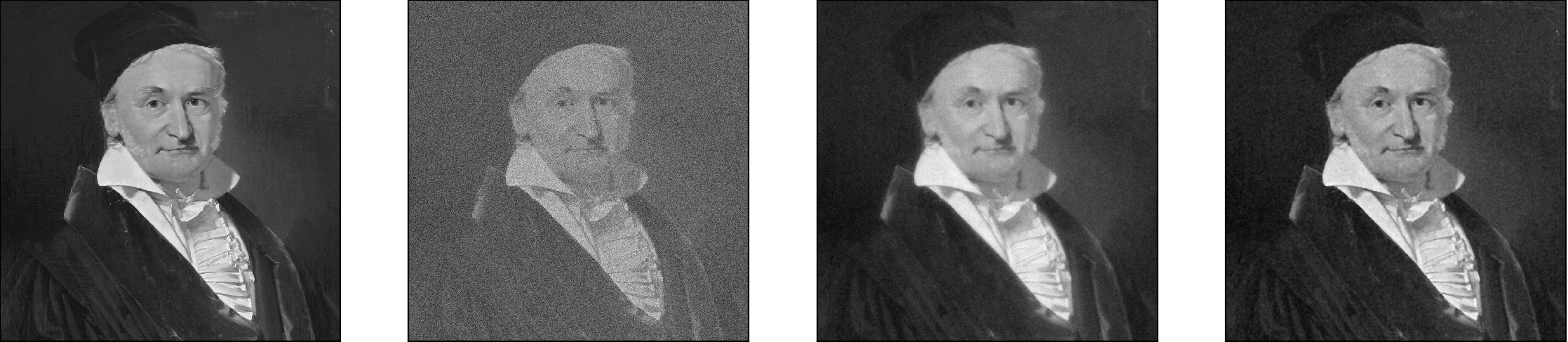}\\
\vspace{0.25cm}
\includegraphics[width=14cm,height=2.8cm]{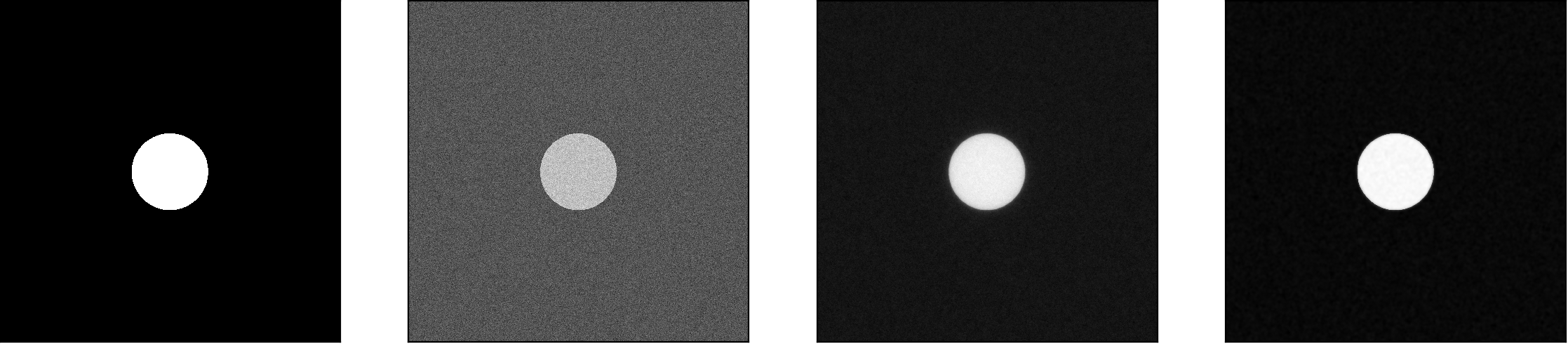}\\
\vspace{0.25cm}
\includegraphics[width=14cm,height=2.8cm]{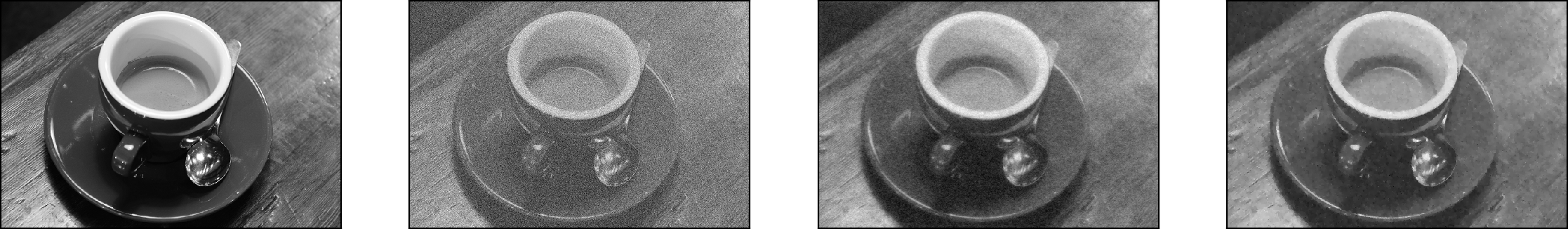}
\caption{\label{f:circ}Original and noisy image,
regularized images for $L^2$ fidelity with fractional and total variation (TV) approaches. 
The optimized parameters $s$ and $\a$ are given as follows - 
Example 1: $(s,\a) = (0.49,50)$ (fractional) and $\a = 3.58$ (TV); 
Example 2: $(s,\a) = (0.5,50)$ (fractional) and $\a = 1$ (TV); 
Example 3: $(s,\a) = (0.5,50)$ (fractional) and $\a = 8.74$ (TV).
The last two columns are corresponding reconstructions for 
the fractional and TV method. While the results are comparable the computing times
differ significantly: 
Example 1: 0.2 sec (fractional) and 2.1 sec (TV); Example 2: 
0.1 sec (fractional), 6.1 sec (TV); Example 3: 0.01 sec (fractional) and 0.2 (TV).}
\end{figure}

\subsection{Fractional Allen--Cahn equation}\label{s:phase}
We consider the fractional Allen-Cahn equation \eqref{eq:fracAC}
with a given initial function $u_0\in L^2(\T^d)$. The function~$f$ 
is the derivative of a double well potential~$F$
with quadratic growth, i.e., 
\[
F(u) = \frac12 \frac{(1-u^2)^2}{1+u^2}, \quad
f(u) = u - \frac{4 u}{(1+u^2)^2},
\]
which leads to linear systems of equations in our semi-implicit
time discretization. 
Snapshots of the evolutions with $\widetilde\veps = 1/8$, $n=512$, and
$\Delta t = 1/100$ at $t=1$, $t=4$, $t=12$, and $t=20$ (rowwise) are shown in 
Figure~\ref{fig:frac_phase}. The first column corresponds to $s=1$,
second to $s=0.45$, third to $s=0.30$, and finally fourth to $s=0.15$.
Clearly the interface in case of a fractional model is sharper, however
the dynamics are slower.

\begin{figure}[h!]
\includegraphics[width=\textwidth,height=0.2\textwidth]{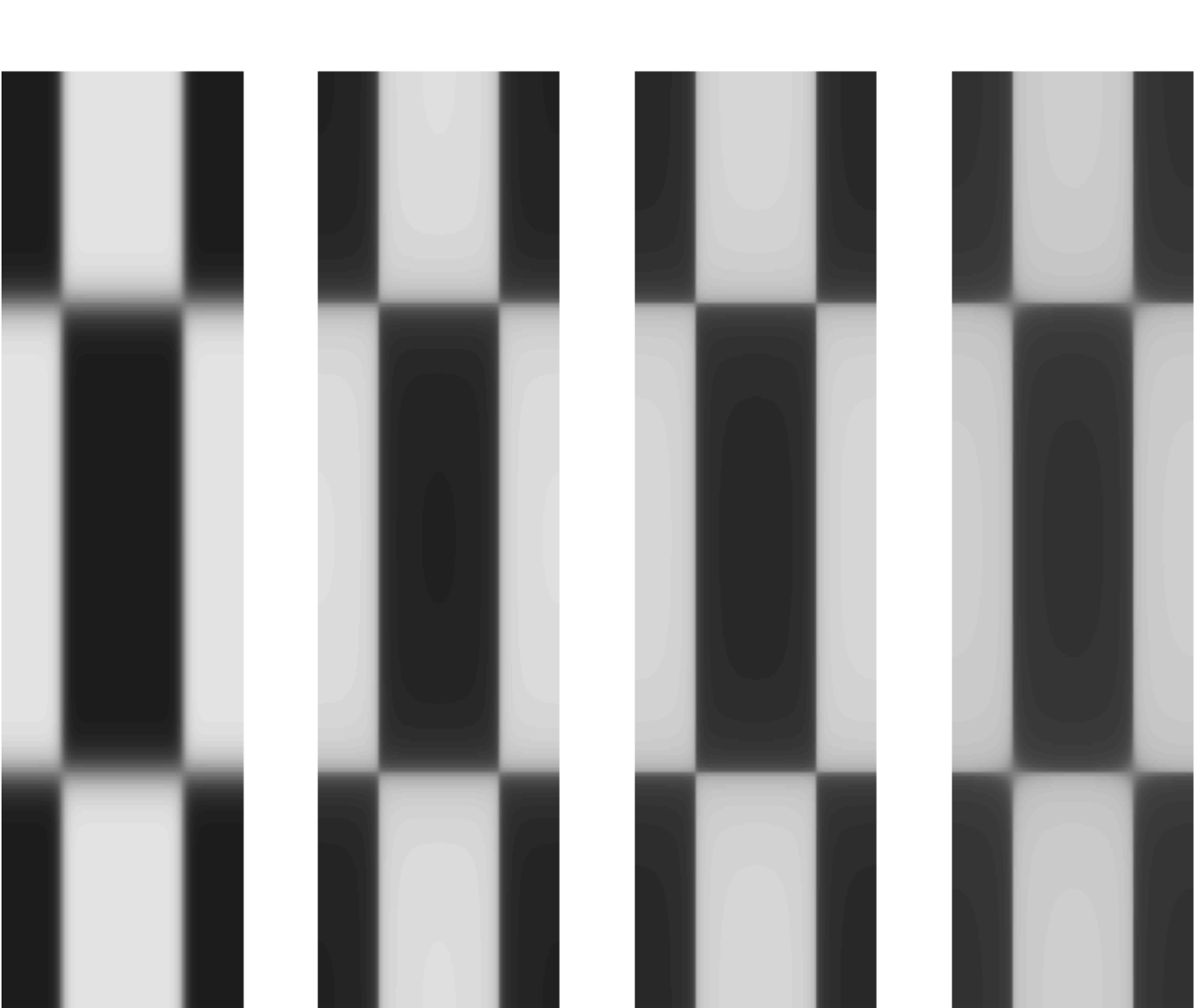}

\vspace*{0.5cm}
\includegraphics[width=\textwidth,height=0.2\textwidth]{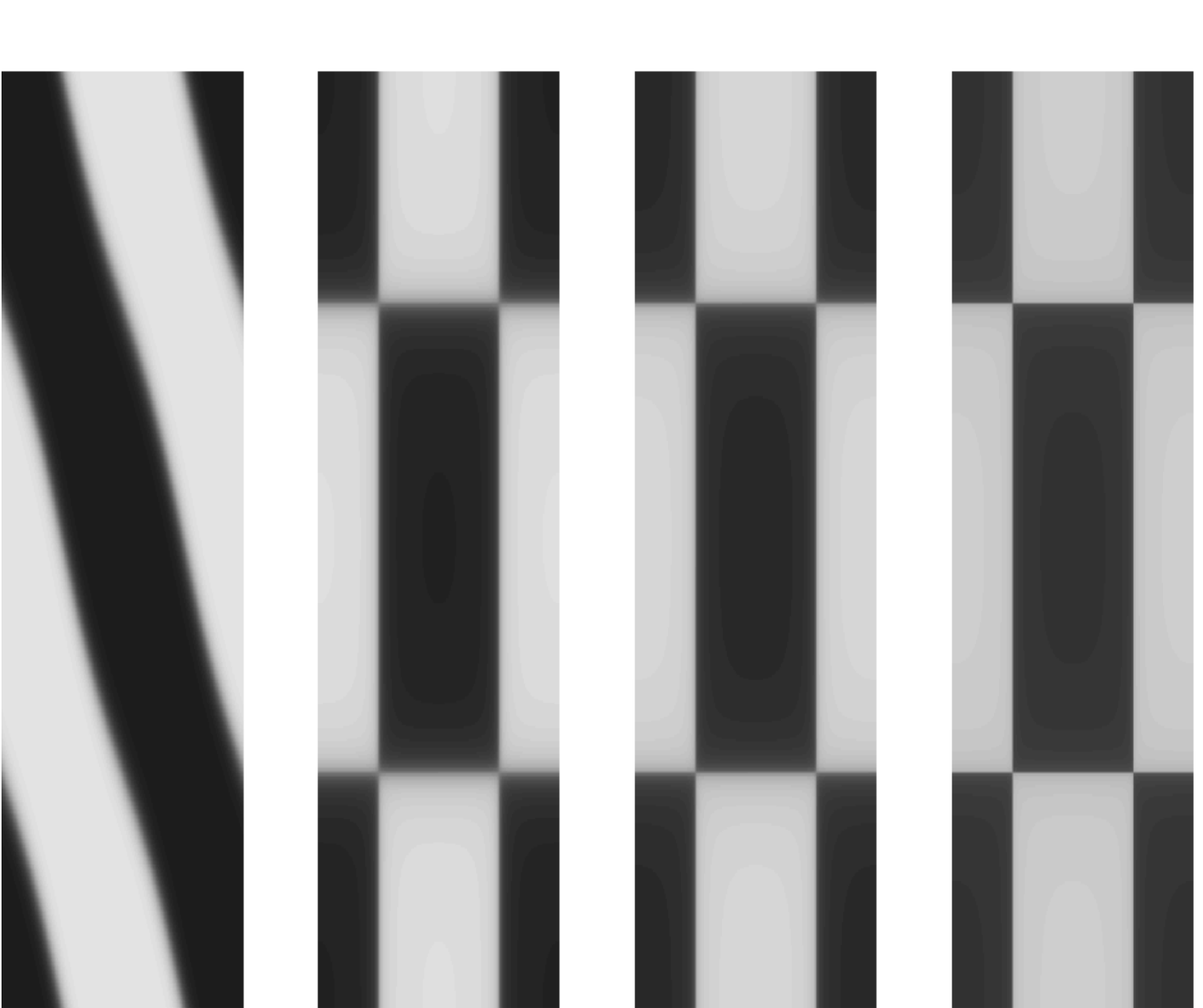}

\vspace*{0.5cm}
\includegraphics[width=\textwidth,height=0.2\textwidth]{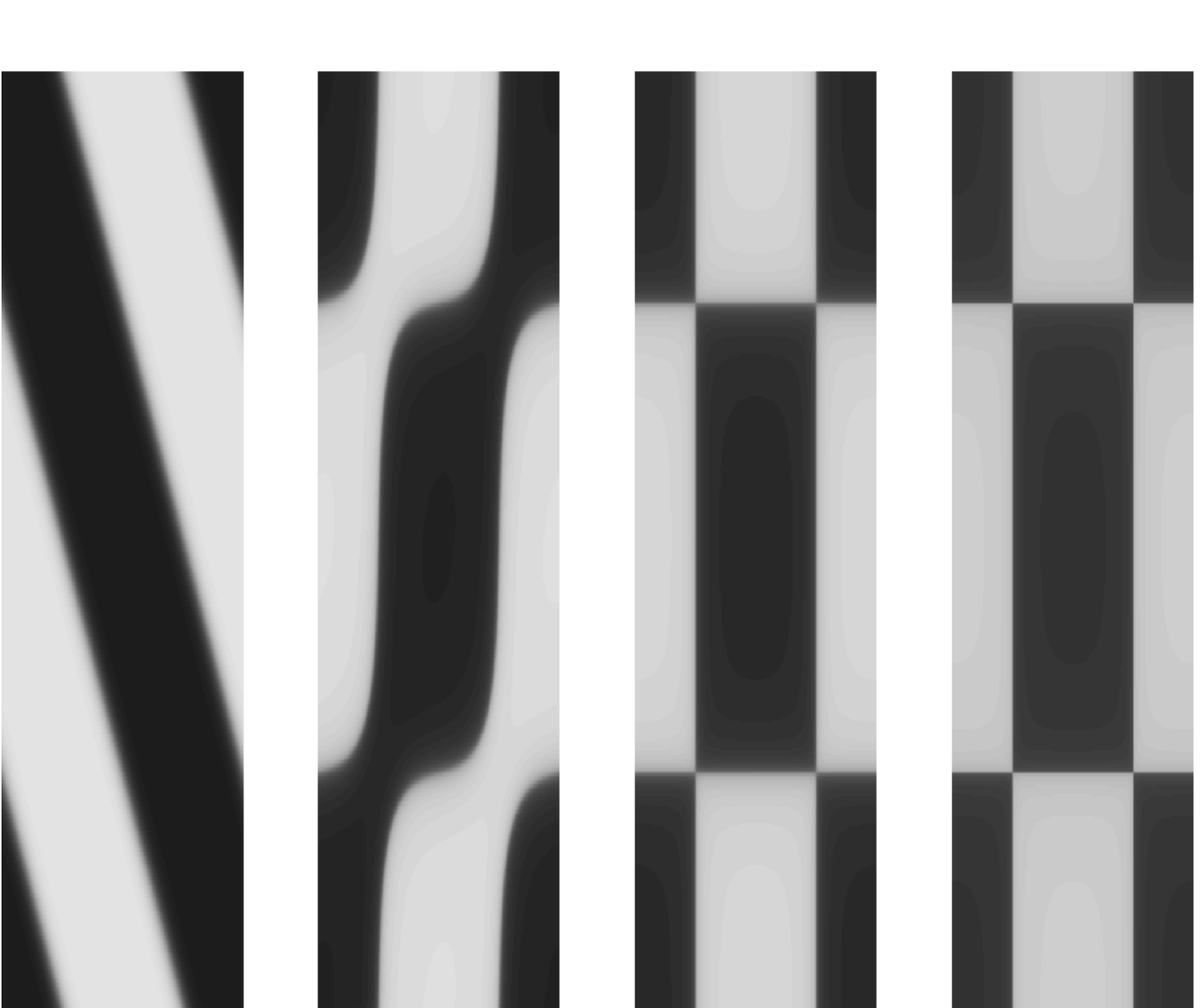}

\vspace*{0.5cm}
\includegraphics[width=\textwidth,height=0.2\textwidth]{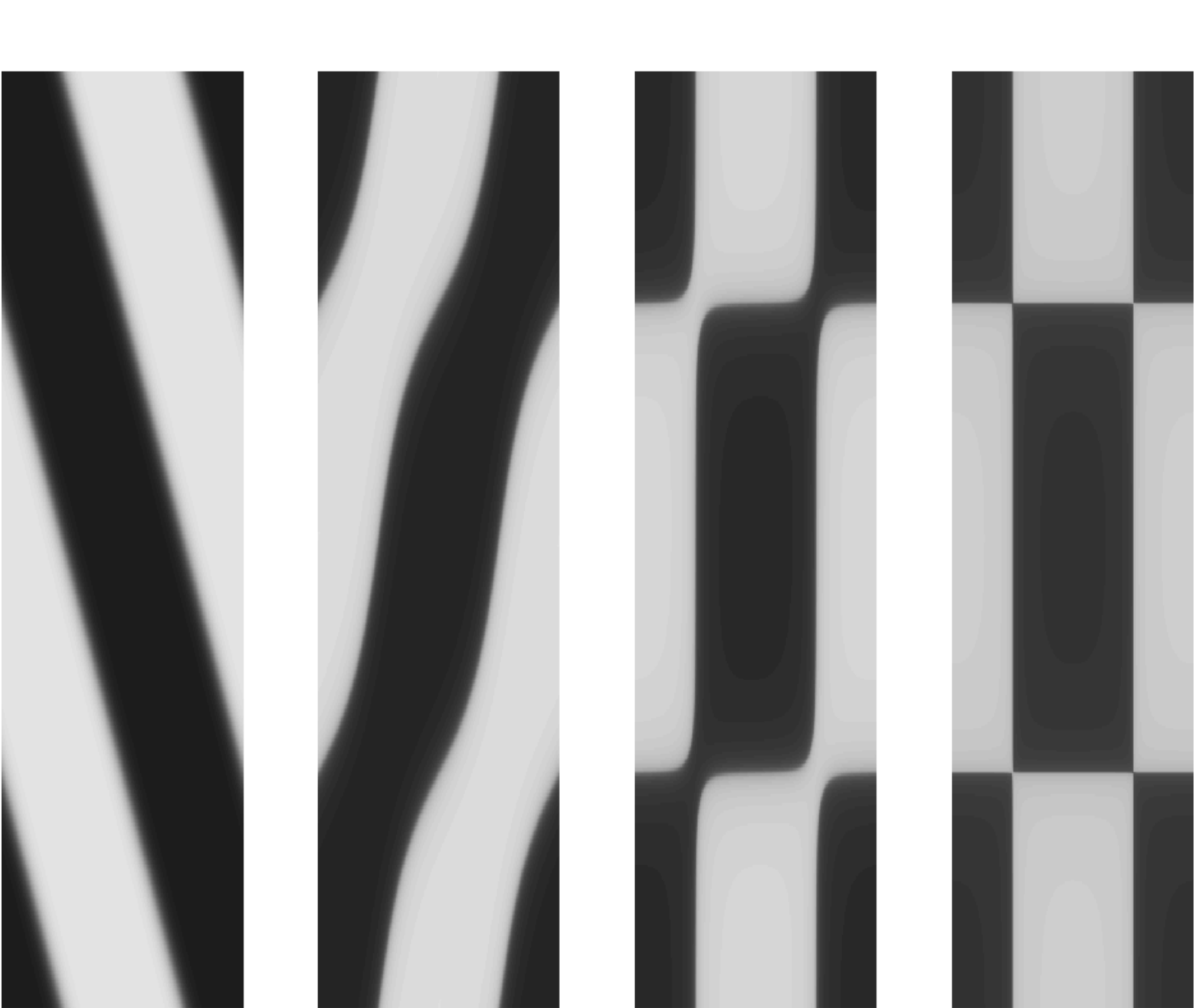}
\caption{\label{fig:frac_phase} Snapshots of Allen--Cahn evolutions with 
$n=512$ at times $t=1$, $t=4$, $t=12$, and $t=20$ (rowwise), respectively. 
Columns represent $s=1$, $s=0.45$, $s=0.30$, and $s=0.15$, respectively. 
In all cases we have set $\widetilde\veps= 1/8$.}
\end{figure}

\subsection{Fractional Cahn-Hilliard equation}\label{s:phase_CH}
We next study the fractional Cahn--Hilliard equation specified in 
\eqref{eq:fracCH} defining $F$ and $f$ as in Subsection~\ref{s:phase}.
In a first experiment we define the initial condition $u_0$ as 
\[
 u_0(x_1,x_2) = \begin{cases}
 1  &  \text{if } (x_1-\frac{2\pi}{3})^2+(x_2-\pi)^2<(\frac{\pi}{3})^2 \\
    &  \mbox{ or }  (x_1-\frac{4\pi}{3})^2+(x_2-\pi)^2<(\frac{\pi}{3})^2, \\
-1  & \mbox{otherwise}.
\end{cases}
\] 
Snapshots of the evolutions with $\widetilde\veps = 1/8$, $n=512$, and
$\Delta t = 1/100$ at $t=0.25$, $t=0.50$, $t=2$, and $t=3$ (rowwise) are 
shown in Figure~\ref{fig:frac_phase_CH} for $s=1$, $s=0.45$, $s=0.30$, and $s=0.15$ (columnwise) with 
$\alpha =1$ in all cases. 

\begin{figure}[h!]
\includegraphics[width=\textwidth,height=0.2\textwidth]{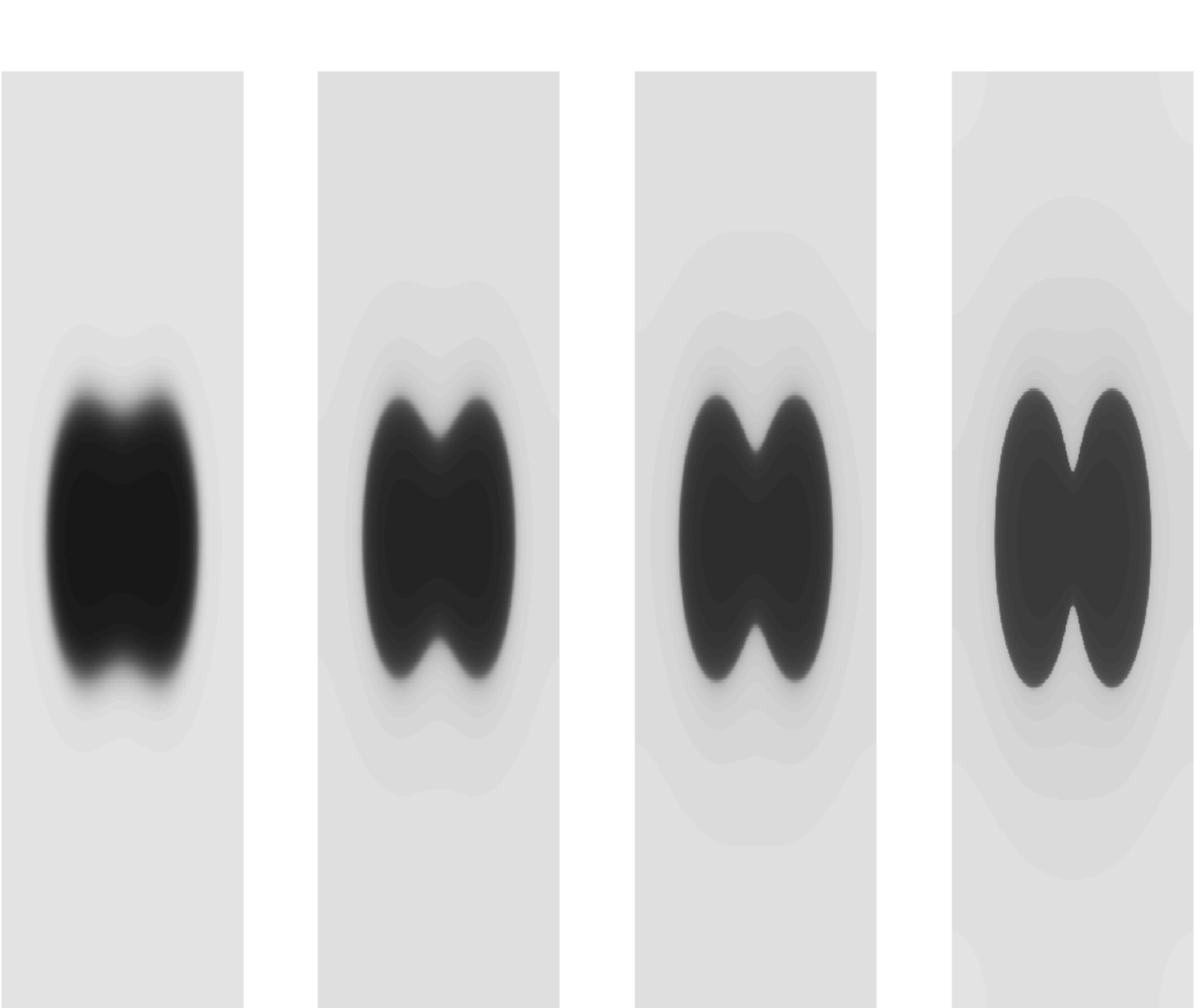}

\vspace*{0.5cm}
\includegraphics[width=\textwidth,height=0.2\textwidth]{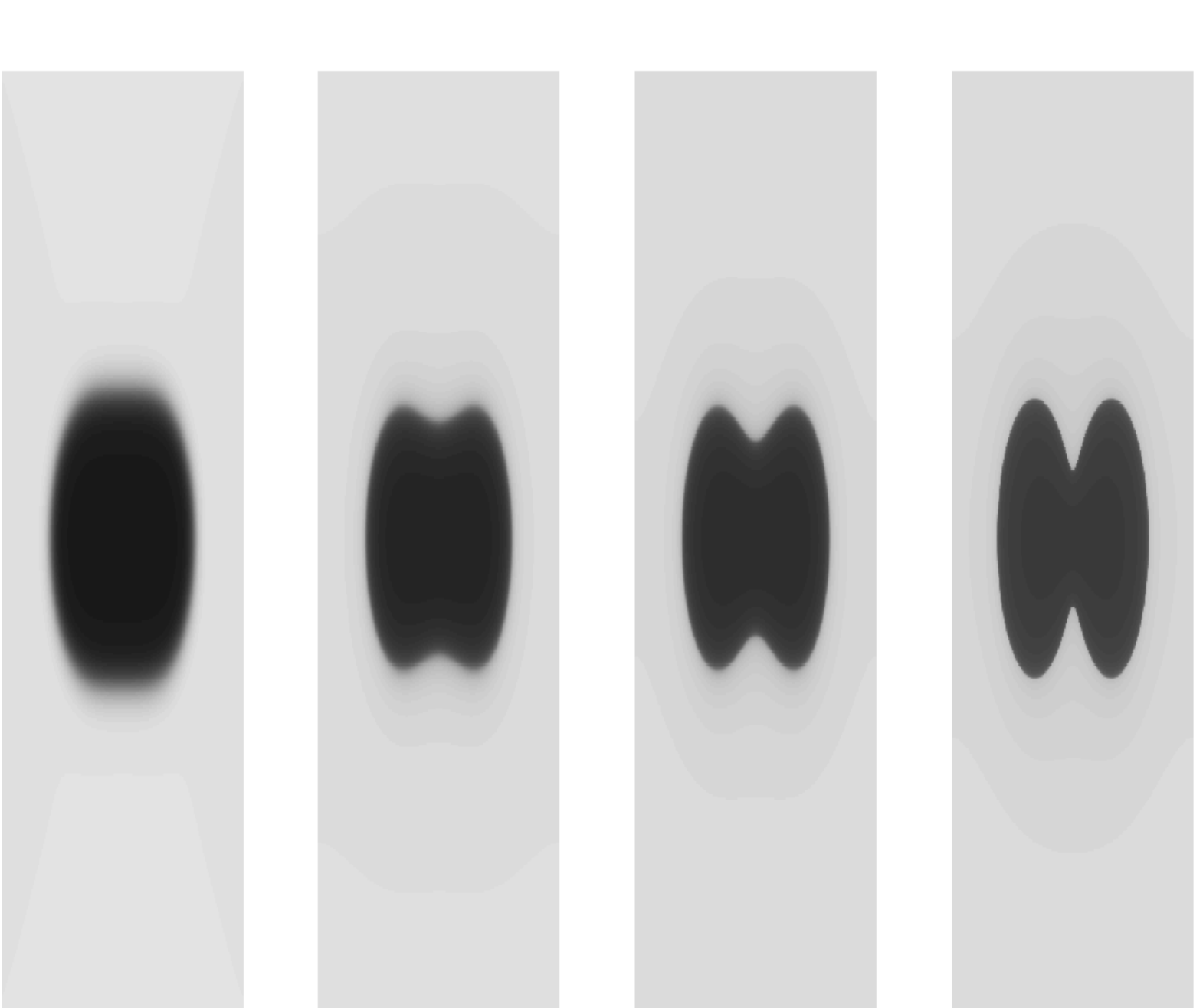}

\vspace*{0.5cm}
\includegraphics[width=\textwidth,height=0.2\textwidth]{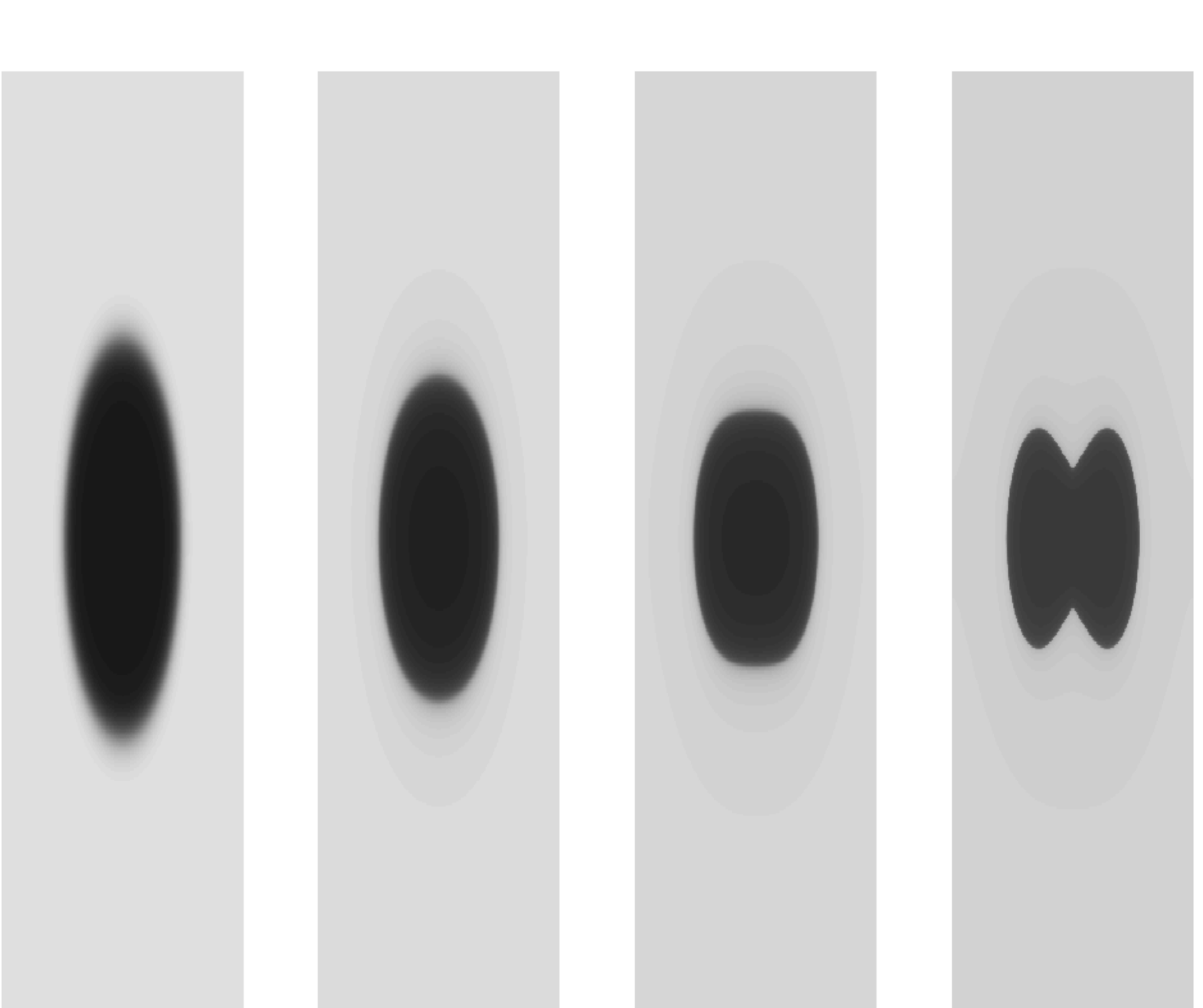}

\vspace*{0.5cm}
\includegraphics[width=\textwidth,height=0.2\textwidth]{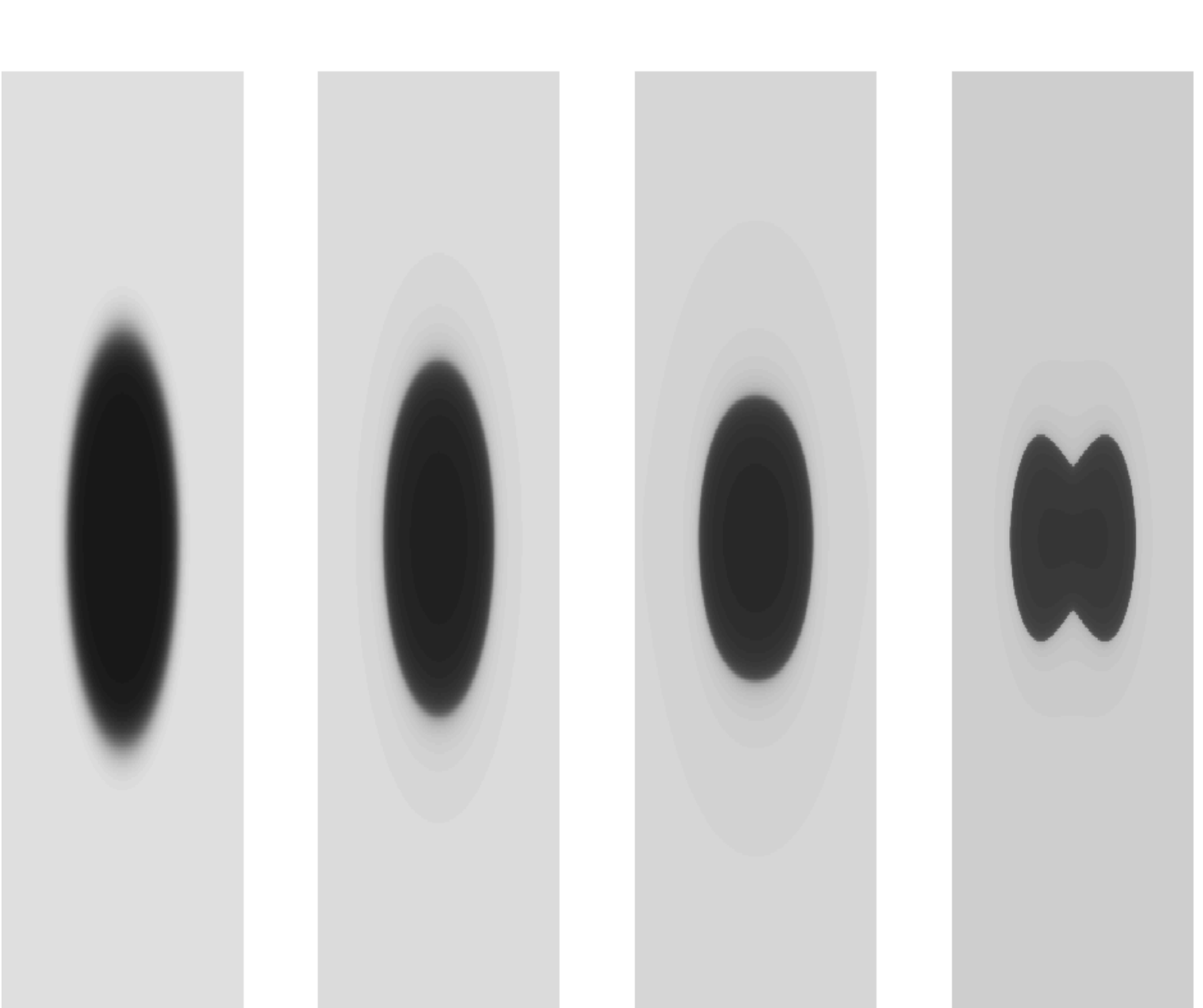}
\caption{\label{fig:frac_phase_CH}Snapshots of Cahn--Hilliard evolutions with $n=512$ 
where the rows correspond to $t=0.25$, $t=0.50$, $t=2$, and $t=3$, respectively. 
Columns represent $s=1$, $s=0.45$, $s=0.30$, and $s=0.15$, respectively. In all cases 
we have set $\widetilde\veps= 1/8$ and $\alpha=1$.}
\end{figure}

In a second experiment we focus on the coarsening dynamics
of the fractional phase field equation. As in \cite{ainsworthanalysis} the initial condition
is given by $u_0 = 2\phi-1 + \delta$ where $\delta$ is a random perturbation uniformly 
distributed in [-0.2,0.2]. 
Snapshots of the evolutions with $\phi=0.5$, 
$\widetilde\veps = 1/8$, $n=512$, and $\Delta t = 1/100$ at $t=0.25$, $t=0.5$, $t=1$, and $t=1.5$ 
(rowwise) are shown in Figure~\ref{fig:frac_phase_CH_2}. The first two columns correspond
to $\a=1$ with $s=1$ and $s=0.20$, respectively. The last two columns are obtained with
$\alpha=1/2$ with $s=1$ and $s=0.20$, respectively.

\begin{figure}[h!]
\includegraphics[width=\textwidth,height=0.2\textwidth]{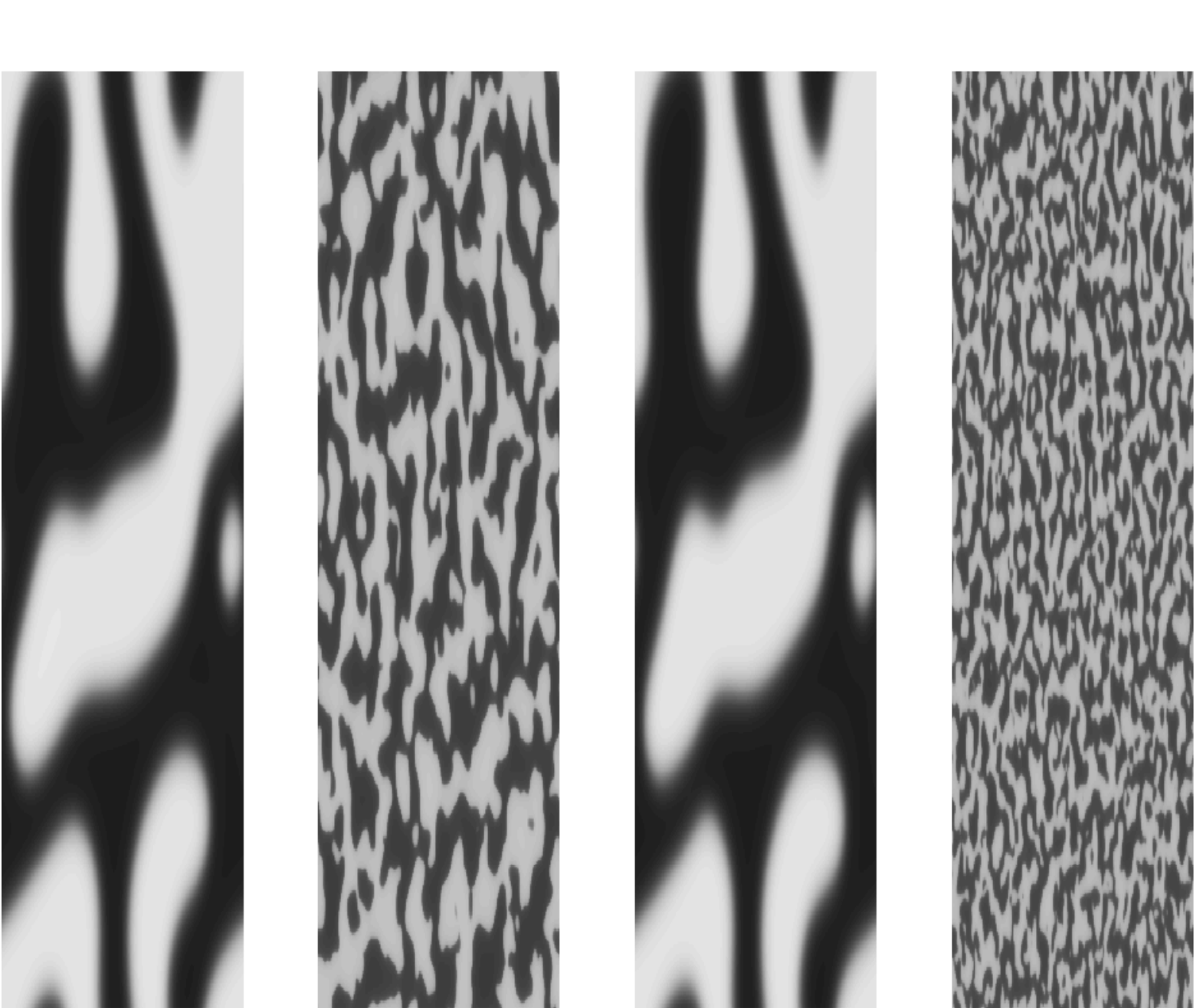}

\vspace*{0.5cm}
\includegraphics[width=\textwidth,height=0.2\textwidth]{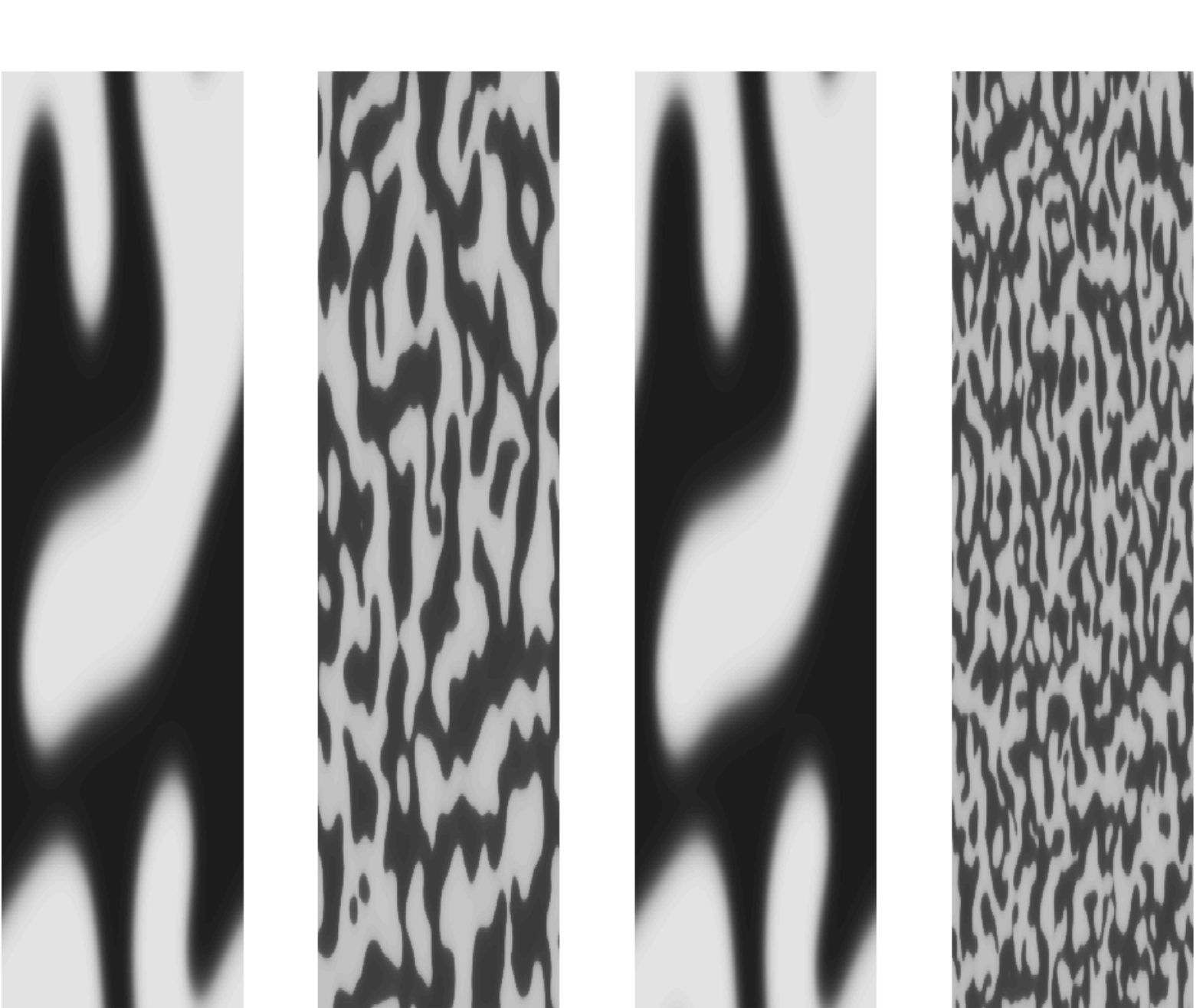}

\vspace*{0.5cm}
\includegraphics[width=\textwidth,height=0.2\textwidth]{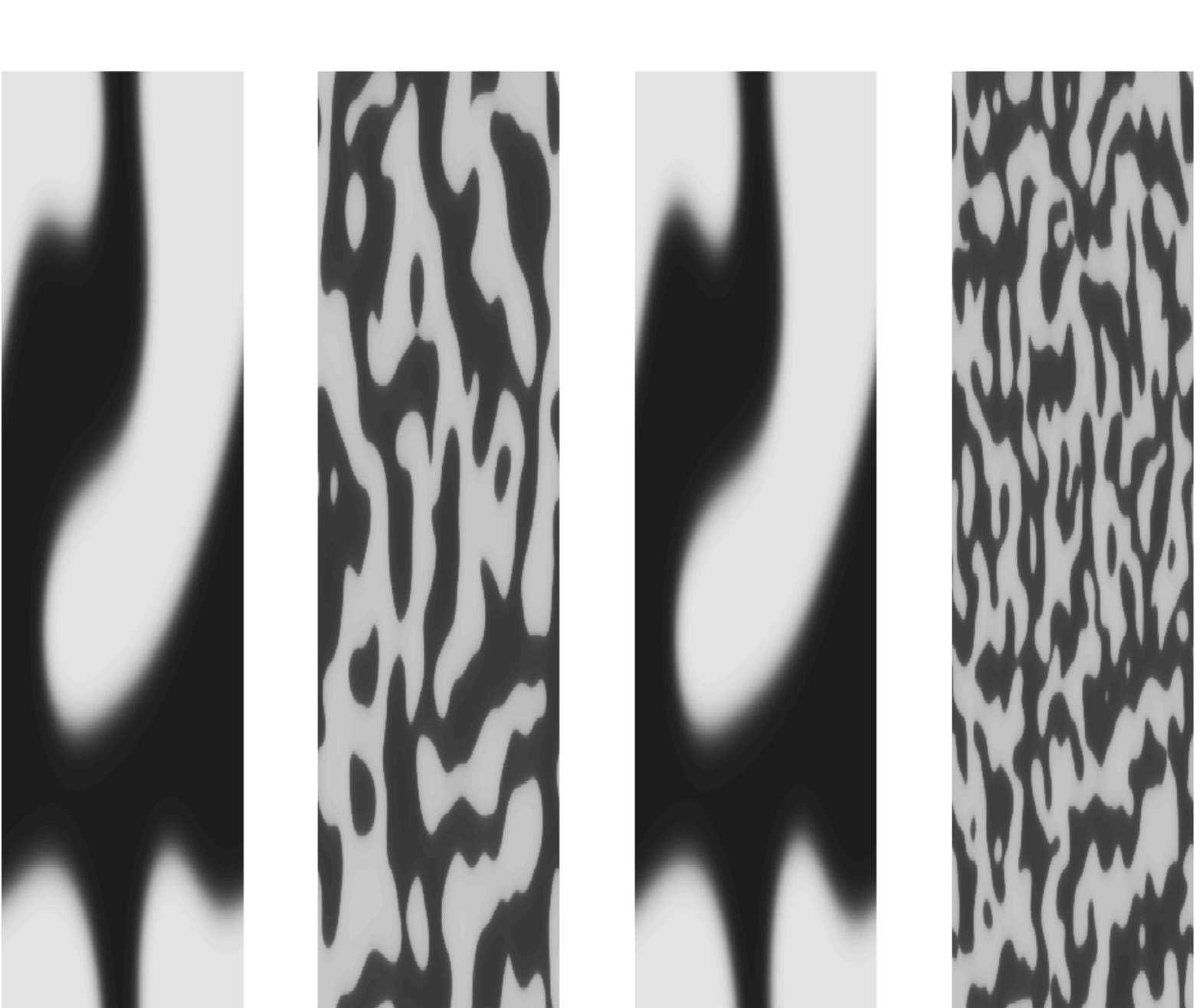}

\vspace*{0.5cm}
\includegraphics[width=\textwidth,height=0.2\textwidth]{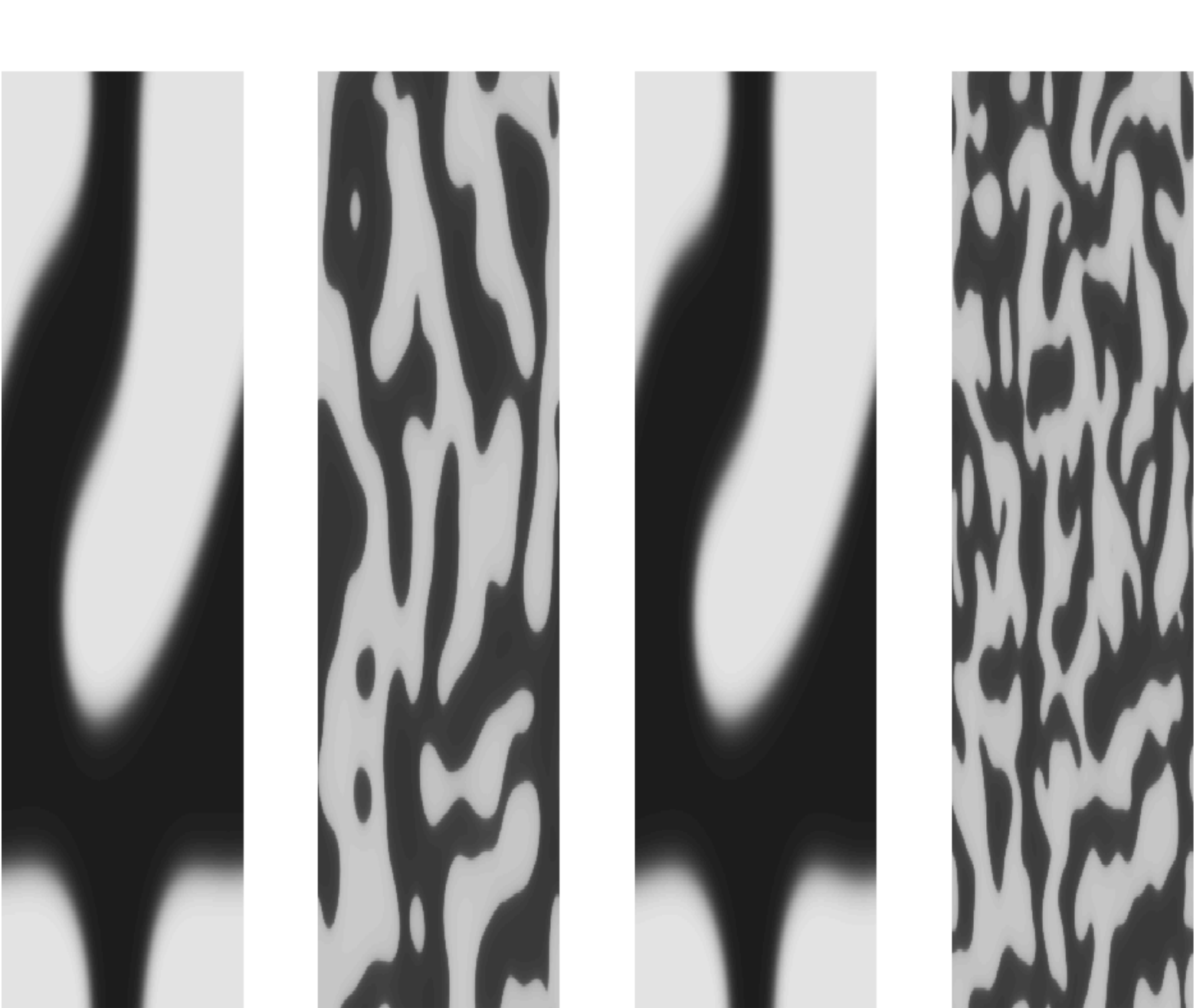}
\caption{\label{fig:frac_phase_CH_2}Snapshots of Cahn--Hilliard evolutions
at $t=0.25$, $t=0.5$, $t=1$, and $t=1.5$ for $\widetilde\veps= 1/8$ (rowwise). The first two 
columns represent $s=1$ and $s=0.20$, respectively with $\alpha=1$. The last two 
columns correspond to $s=1$ and $s=0.20$, respectively but here $\alpha=1/2$.}
\end{figure}

\section*{Acknowledgement}
We thank Pablo Stinga for stimulating discussions. We also thank Diego Torrejon 
for help with Python.

\section*{Funding}
The work of the first author is partially supported by NSF grant DMS-1521590.

\bibliographystyle{plain}
\bibliography{biblio}

\end{document}